\theoremstyle{definition}
\newtheorem{thm}{Theorem}
\newtheorem*{thm*}{Theorem}
\newtheorem{lem}[thm]{Lemma}
\newtheorem{prop}[thm]{Proposition}
\newtheorem{dfn}[thm]{Definition}
\newtheorem{rmk}[thm]{Remark}
\newcommand{\T}{\mathbb{T}}
\newcommand{\R}{\mathbb{R}}
\newcommand{\Z}{\mathbb{Z}}
\newcommand{\image}{\operatorname{Im}}
\newcommand{\supp}{\operatorname{supp}}
\newcommand{\divisor}{\operatorname{div}}
\begin{document}

\title{Riemann-Roch inequality for smooth tropical toric surfaces}


\author{Ken Sumi}
\date{}


\maketitle

\begin{abstract}
For a divisor $D$ on a tropical variety $X$,
we define two amounts  in order to estimate the value of $h^{0}(X,D)$,
which are described by terms of global sections and
computed more easily than $h^{0}(X,D)$.
As an application of its estimation, we show that a Riemann-Roch inequality holds for
smooth tropical toric surfaces.
\end{abstract}

\section{Introduction}



In this paper, we prove a Riemann-Roch inequality for smooth tropical toric surfaces
by calculating $h^{0}(X,D)$, which is a higher dimensional generalization of the rank of divisor.

For a divisor $D$,
the rank of divisor $r(D)$ on a finite graph is defined by Baker-Norine\cite{BN}.
Gathmann-Kerber\cite{GK} and Mikhakin-Zharkov\cite{MZ}
generalized the definition of the rank of divisor  for tropical curves and
showed a Riemann-Roch theorem for compact tropical curves.

The rank of divisor is defined as the maximum number $k$ such that $|D-E|\neq \emptyset$
for arbitrary effective divisor $E$ of degree $k$.
It is an alternative notion of the dimension of the projectivization of $H^{0}(X,\mathcal{O}(D))$,
however, the rank of divisor 
$r(D)$ cannot be given only by the correspondence tropical module $\Gamma(X,\mathcal{O}(D))$.
For example,
\cite{yoshitomi}[Example 6.5] says that there are two pairs $(C,D)$ and $(C',D')$
of tropical curves and divisors such that spaces of global sections
$H^{0}(C,\mathcal{O}(D))$,
$H^{0}(C',\mathcal{O}(D'))$
are isomorphic, while $r(D)\neq r(D')$.

A higher dimensional generalization of $r(D)$ (precisely, $r(D)+1$) is introduced in Cartwright\cite{DC2},
denoted by $h^{0}(X,D)$, and formulate a Riemann-Roch inequality for tropical surfaces.
The author computed the value of $h^{0}(X,D)$ for a divisor $D$ on a tropical torus $X$
and showed that the Riemann-Roch inequality formulated by Cartwright
 holds for tropical Abelian surfaces in \cite{ken}.


In this paper, we generalize the method of the computation of the value of $h^{0}(X,D)$ used in
the author's previous work \cite{ken} for general tropical varieties and divisors.
Concretely, we define two amounts $h_{a}^{0}$ and $h_{b}^{0}$.
We define $h_{a}^{0}$ as the maximal number $k$ such that
there is a tropical submodule of $\Gamma(X,\mathcal{O}(D))$
satisfying that it is generated by $k$ elements and its generators satisfies
the indentity theorem, that is, two generators are distinct if and only if
these are distinct on any open subset of $X$.
In general, the identity theorem does not hold in tropical geometry since
tropical regular function means a piecewise-linear convex function with integral slopes.
We define $h_{b}^{0}$ as the maximal number $k$ such that
for all $x\in X$ there exist $k$ sections satisfying that 
each two sections of these $k$ sections are distinct `around' $x$.

Then we get the following inequality.
\begin{prop}[Proposition \ref{Mainresult1}]
\[ h^{0}_{a}(X,D)\leq h^{0}(X,D)\leq h^{0}_{b}(X,D)\]
\end{prop}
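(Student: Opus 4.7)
The plan is to prove the two inequalities separately, using the identification $h^0(X,D)=r(D)+1$, where $r(D)$ is the tropical rank --- the largest integer $r$ such that $|D-E|\neq\emptyset$ for every effective divisor $E$ of degree $r$. For the lower bound $h_a^0(X,D)\leq h^0(X,D)$, I would take generators $f_1,\ldots,f_{k_a}$ (with $k_a:=h_a^0(X,D)$) of a tropical submodule of $\Gamma(X,\mathcal{O}(D))$ satisfying the identity theorem, and show $r(D)\geq k_a-1$ by a tropical interpolation argument. Given an effective divisor $E=x_1+\cdots+x_{k_a-1}$, I look for a section $f=\bigoplus_i(c_i+f_i)$ with $\divisor(f)+D\geq E$. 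At each $x_\ell$ this reduces to forcing at least two of the pieces $c_i+f_i$ to tie for the maximum, which imposes a single linear equation on $(c_1,\ldots,c_{k_a})$; the identity-theorem hypothesis on the $f_i$ ensures these $k_a-1$ equations are sufficiently non-degenerate modulo the overall tropical scaling that they admit a simultaneous solution, yielding the desired $f$.

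For the upper bound $h^0(X,D)\leq h_b^0(X,D)$, set $k=h^0(X,D)$ and fix $x\in X$; the goal is to exhibit $k$ sections of $\mathcal{O}(D)$ pairwise distinct around $x$. I would choose $k$ nearby points $y_0,\ldots,y_{k-1}$ inside a prescribed neighborhood of $x$ so that their sum $F=\sum y_j$ is a generic effective divisor of degree $k$ with $|D-F|=\emptyset$ (possible since $r(D)=k-1$ means some degree-$k$ divisors have empty linear equivalence class, and genericity allows placing such a configuration near $x$). For each $i$ set $E_i=F-y_i$; applying $r(D)\geq k-1$ to each $E_i$ yields a section $g_i$ with $\divisor(g_i)+D\geq E_i$. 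If two such sections $g_i$ and $g_j$ agreed on some neighborhood $V$ of $x$ containing all the $y_\ell$, then $\divisor(g_i)|_V=\divisor(g_j)|_V$ would dominate both $E_i|_V$ and $E_j|_V$, hence $E_i\vee E_j=F$; since $\supp(F)\subset V$, this gives $\divisor(g_i)+D\geq F$ globally, contradicting $|D-F|=\emptyset$. Therefore the $g_i$ are pairwise distinct around $x$.

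The principal obstacle in both parts is the failure of a literal identity theorem in tropical geometry, which is precisely what distinguishes $h_a^0$ from $h_b^0$ and makes $h^0$ genuinely interpolate between them. In the first inequality this failure is neutralized by the identity-theorem hypothesis built into $h_a^0$, which converts a naive parameter count into an actual existence statement for the interpolating section. In the second, the delicate step is to locate the auxiliary configuration $y_0,\ldots,y_{k-1}$ within \emph{every} neighborhood of $x$ (so that ``distinct around $x$'' is genuinely achieved), while simultaneously preserving the genericity $|D-F|=\emptyset$ needed to run the contradiction. Making this placement, together with verifying that local agreement of two sections on a neighborhood large enough to contain all $y_\ell$ already forces the global obstruction, is the step I expect to be the most technical.
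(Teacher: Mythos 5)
Your first inequality is essentially the paper's argument: you interpolate by choosing coefficients $c_{i}$ so that at each prescribed point at least two of the pieces of $``\sum_{i}c_{i}s_{i}"$ tie for the maximum; the paper makes this explicit with a tropical Vandermonde determinant, which is exactly the tropical Cramer solution of your system. Two corrections, though. The tie condition at $x_{\ell}$ is not a single linear equation in $(c_{1},\ldots,c_{k})$ but membership in a tropical hyperplane, and the simultaneous solvability of $k-1$ such conditions modulo scaling holds unconditionally; the hypothesis $l_{\langle s_{1},\ldots,s_{k}\rangle}(x)=k$ is needed not for solvability but to guarantee that a tie at $x_{\ell}$ actually produces a non-smooth point of the section, i.e.\ that the two tying pieces have distinct slopes there. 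Also, on a surface you cannot write $\divisor(f)+D\geq E$ for a sum of points $E$; the paper's definition of $h^{0}$ uses ``passes through,'' and your reduction to the curve-style rank $r(D)$ must be read that way throughout.

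The second inequality has a genuine gap. Your construction requires, for every point $x$ and inside \emph{every} neighborhood of $x$, a configuration of $k=h^{0}(X,D)$ points through which no divisor of $|D|$ passes. The definition of $h^{0}$ only provides one such configuration somewhere on $X$; the failing configurations need not meet $V^{k}$ for a prescribed small neighborhood $V$ of a prescribed $x$ (the supports of divisors in $|D|$ can sweep out a full neighborhood of $x$), so ``genericity allows placing such a configuration near $x$'' is an unproved and in general false step. Moreover, even granting the placement, the conclusion you reach is too weak: $h^{0}_{b}$ counts local monomials up to tropical scalar multiplication, so you must produce $k$ sections with pairwise distinct \emph{slopes} on the components of $U(M)$ adjacent to $x$, not merely $k$ pairwise distinct functions near $x$ --- two sections that locally differ by an additive constant count once in $l_{M}(x)$. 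The paper argues in the opposite direction: it fixes a point $x'$ realizing the minimum $L=h^{0}_{b}(X,D)$ of $l_{\Gamma(X,\mathcal{O}(D))}$, assumes $h^{0}(X,D)\geq L+1$, and observes that a section passing through $L$ generic points near $x'$ is a maximum of at most $L$ affine pieces with the $L$ available slopes; the resulting tie conditions at the $L$ points force, by a pigeonhole/cycle argument, an equality $l_{i_{1},\ldots,i_{k}}(p_{j_{1}},\ldots,p_{j_{k}})=0$ cutting out a positive-codimension locus in $X^{L}$, which generic points avoid. You would need an argument of this direct kind, or genuine proofs of the two missing claims above, to close the second half.
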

When we have a concrete minimal generating set of $\Gamma(X,\mathcal{O}(D))$,
we can compute these amounts $h_{a}^{0}$ and $h_{b}^{0}$ more easily than computing $h^{0}(X,D)$
since $h^{0}(X,D)$ is defined by terms of divisors.

Tropical toric varieties are studied by many mathematicians.
For example,
Mayer\cite{HM} and Shaw \cite{S0} studied intersection theory on tropical toric varieties, especially
smooth tropical toric surfaces.

The author shows the following basic statement.
\begin{prop}[Proposition \ref{gens}]
Let $X$ be a smooth tropical toric variety and $D$ be a divisor on $X$. Then
\[ \Gamma (X,\mathcal{O}(D))\cong ``\sum_{\divisor (x^{m})+D\geq 0 } \T \cdot x^{m}" . \]
\end{prop}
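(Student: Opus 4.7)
The plan is to establish the isomorphism by checking both containments separately, following the classical toric picture adapted to the tropical setting. Let $M$ denote the character lattice of the torus and $\Sigma$ the fan defining $X$; then $X$ is covered by affine charts $U_{\sigma}$ indexed by the cones of $\Sigma$, with the dense torus orbit corresponding to $\sigma=\{0\}$ and the torus-invariant boundary divisors $D_{\rho}$ corresponding to rays $\rho\in\Sigma(1)$. Write $D=\sum_{\rho}a_{\rho}D_{\rho}$ for the coefficients of $D$ along these boundary components.

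For the easy direction, I would verify that every monomial $x^{m}$ with $\divisor(x^{m})+D\geq 0$ lies in $\Gamma(X,\mathcal{O}(D))$, which is essentially tautological from the definition of $\mathcal{O}(D)$. A tropical linear combination of such monomials, viewed as the pointwise maximum of the corresponding affine functions $a_{m}+\langle m,\cdot\rangle$, still satisfies $\divisor(f)+D\geq 0$ because the order of vanishing of a convex piecewise-linear function along a tropical divisor is the minimum of the orders of its pieces. This shows that the right-hand side embeds as a tropical submodule of $\Gamma(X,\mathcal{O}(D))$.

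For the nontrivial inclusion, take $f\in\Gamma(X,\mathcal{O}(D))$. Restricted to the dense torus $T\subset X$, the function $f$ is a convex piecewise-linear function with integral slopes and hence admits a presentation $f=\max_{m\in A}(a_{m}+\langle m,\cdot\rangle)$ for some finite subset $A\subset M$; the finiteness of $A$ follows from the a priori bound on the slopes imposed by the condition $\divisor(f)+D\geq 0$, which confines the Newton polytope to a bounded region of $M_{\R}$. It then remains to show that each individual piece already defines a section of $\mathcal{O}(D)$, i.e., that $\langle m,u_{\rho}\rangle+a_{\rho}\geq 0$ holds for every $m\in A$ and every ray $\rho\in\Sigma(1)$, where $u_{\rho}$ is the primitive generator of $\rho$.

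The main obstacle is precisely this last step, and this is where the smoothness hypothesis enters. In the chart $U_{\sigma}$ corresponding to a maximal cone $\sigma$, smoothness guarantees that the rays generating $\sigma$ form part of a $\Z$-basis of the lattice, so $U_{\sigma}\cong\T^{n}$ and the boundary strata are coordinate hyperplanes. I would argue that if some piece $a_{m}+\langle m,\cdot\rangle$ violated the inequality at a ray $\rho\subset\sigma$, then by moving along a suitable path into the boundary stratum $V(\rho)$ that stretches the $\rho$-coordinate to infinity while keeping the others bounded, this piece would dominate the others and force $f$ to have a pole along $D_{\rho}$ exceeding $a_{\rho}$, contradicting $\divisor(f)+D\geq 0$. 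Smoothness is essential here because it lets us independently send each coordinate to the boundary, isolating the contribution of one piece of the maximum. Aggregating the resulting inequalities over all rays gives $\divisor(x^{m})+D\geq 0$ for every $m\in A$, and so $f$ lies in the tropical submodule generated by the monomials on the right-hand side, completing the proof.
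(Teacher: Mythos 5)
Your proposal is correct and follows essentially the same route as the paper: the easy inclusion is tautological, and the reverse inclusion is obtained by showing that if some monomial $x^{m'}$ of $f$ violated $\langle m',e_{\rho}\rangle+a_{\rho}\geq 0$ at a ray $\rho$, then $\deg(f)_{\rho}=\min_{m\in A}\langle m,e_{\rho}\rangle\leq\langle m',e_{\rho}\rangle$ would force $\divisor(f)+D\not\geq 0$ along $D_{\rho}$ — your boundary-limit argument is just a geometric proof of that degree formula, which the paper isolates as a separate lemma. The only minor imprecision is that the violating piece itself need not be the one that dominates near $D_{\rho}$ (the dominating piece is the one with \emph{minimal} $\langle m,e_{\rho}\rangle$, which only makes the pole worse), but the contradiction goes through unchanged.
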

By using the above two propositions and classical Riemann-Roch inequality for smooth tropical toric
surfaces, we get the value of $h^{0}(X,D)$ and show
the Riemann-Roch inequality for smooth tropical toric surfaces.

\section{Preliminaries}

\subsection{Tropical modules }

The tropical semifield $\T$ is the set $\R\cup \{ -\infty \}$ equipped with
the tropical sum $``x+y"=\max \{ x,y \}$ and
the tropical product $``xy"=x+y$ for $x,y\in \T$.
A commutative semigroup $V$  
equipped with the unit $\{ -\infty\}$ and a scalar product $\T\times V \rightarrow V$ 
is called a {\it tropical module} or a $\T${\it -module}
if the following conditions are satisfied:
\begin{itemize}
\item $``(x+y)v"=``xv+yv"$ for any $x,y\in \T$, any $v\in V$;
\item $``x(v+w)"=``xv+xw"$ for any $x \in \mathbb{T}$, any $v,w\in V$;
\item $``x(yv)"=``(xy)v"$ for any $x,y\in \mathbb{T}$, any $v\in V$;
\item $``0\cdot v"=v$ for the multiplicative unit $0\in \T$ and any $v\in V$;
\item if $``xv"=``yv"$ for some $x,y \in \T$ and $v\in \T$,
then $x=y$ or $v=-\infty$.
\end{itemize}

Let $V$, $W$ be tropical modules.
A map $f\colon V\to W$ is called a {\it tropical linear morphism} if
$f$ satisfies $f(``v_{1}+v_{2}")=``f(v_{1})+f(v_{2})"$ and $f(``tv")=``tf(v)"$ for any $v,v_{1},v_{2}\in V$, $t\in \T$.
If there is the set theoretical inverse map $f^{-1}$ and it is also a tropical linear morphism,
then we call $f$ an isomorphism.


A function $f:\R ^{n}\rightarrow \T$ is a {\it tropical polynomial} (resp. {\it tropical Laurent polynomial})
if $f$ is a constant map to $\{ -\infty\}$ or is
of the form 
$f(x)=\displaystyle\max_{j\in S}\{ a_{j}+j\cdot x \}$,
where $S$ is a finite subset of $(\Z_{\geq 0})^{n}$ (resp. $\Z^{n}$) and 
$a_{j}\in \R$. 
A tropical polynomial naturally extends to a function from $\T^{n}$ to $\T$.
The set of tropical polynomials on $\R^{n}$ is naturally a tropical module.

A {\it $\Z$-affine linear function} is a tropical Laurent monomial, that is, 
an affine linear function on $\R^{n}$ whose slope is in $(\Z ^{n})^{*}$.

For a matrix $(t_{ij})_{1\leq i,j \leq l}$ with $t_{ij}\in \T$,
we define the tropical determinant $\operatorname{det}_{\text{trop}}(t_{i,j})$ to be 
\[ \max_{\sigma\in \mathfrak{S}_{l}}\left\{ \sum_{i=1}^{n}t_{\sigma (i)i} \right\} ,\] 
where $\mathfrak{S}_{l}$ is the symmetric group of $\{ 1,\ldots ,l \}$.

\begin{dfn}
Let $M$ be a tropical module.
An element $m\in M$ is {\it an extremal} if there exist no elements $a,b\in M$
such that $ ``a+ b" =m$ and $a,b\neq m$
\end{dfn}


\subsection{Tropical varieties}

A tropical variety is defined as a balanced polyhedral complex with weight $1$
whose each cells are locally a convex rational polyhedron in $\T^{n}$.

\begin{dfn}
The {\it sedentarity}  of a point $t=(t_{1},\ldots ,t_{n})\in \T^{n}$
 is the set $I:=\{ i\in \{ 1,\ldots ,n \} \mid  t_{i}=-\infty \}$.
A {\it face} of $\T^{n}$ of sedentarity $I$ is a set
$\{  t\in \T^{n} \mid t_{i}=\infty \text{ for all }i\in I \}$.
The face of sedentarity $\emptyset$ is $\T^{n}$ itself.
\end{dfn}

\begin{dfn}
An {\it rational polyhedron $P$ in $\T^{n}$} is a finite intersection of
a face of $\T^{n}$  and half-spaces
defined by finite inequalities $a_{i}\cdot x +b_{i}\geq 0, i\in I$
 where
$x\in \R^{n}$, $a\in \Z^{n}$, and $b\in \R$.
A {\it face of $P$} is an intersection of $P$ and a face of $\T^{n}$ or
a hyperplane $a\cdot x +b= 0$, $a\in \Z^{n}$, $b\in \R$
appearing the definition of $P$.


A face or a point is {\it inner} if it is contained in $\R^{n}$.
\end{dfn}

\begin{dfn}
An {\it rational polyhedral complex in $\T^{n}$} is an union of
a collection of rational polyhedra in $\T^{n}$ such that
the intersection of any subcollection is a common face of
all rational polyhedra in the subcollection.

A {\it cell of rational polyhedral complex} $\mathcal{P}$ is
a face of an rational polyhedron in the collection.
A {\it facet of $\mathcal{P}$} is face which is not contained in any other face.

The {\it dimension of} an rational polyhedral complex is the maximal dimension
of rational polyhedra in the collection.
If all facets of an rational polyhedral complex is $n$-dimensional, then
the complex is said to be pure $n$-dimensional or pure of dimension $n$.

An {\it $n$-dimensional weighted rational polyhedral complex}
\ is a pure $n$-dimensional rational polyhedral complex with a weight function
which is a $\Z$-valued function on the set of $n$-dimensional cells.
Then we can consider an rational polyhedral complex as
a weighted complex whose weight function is the constant $1$.
\end{dfn}

\begin{dfn}\label{bal}
Let $P$ be a $d$-dimensional weighted rational polyhedral complex in $\T^{n}$
with a weight function $w$ and $E$ be a $(d-1)$-dimensional cell of $P$ with $E\cap \R^{n}\neq \emptyset$.
Let $F_{1},\ldots ,F_{k}$ be the $d$-dimensional cells adjacent to $E$ and
$\pi \colon \R^{n}\to \R^{n-d+1}$ be the projection whose kernel is parallel to $E$.
Then $\pi (F_{1}),\ldots ,\pi (F_{k})$ are rays adjacent to the point $\pi (E)$.
Let $v_{1},\ldots ,v_{k}\in \Z^{n-d+1}$ be the primitive vectors parallel to
$\pi (F_{1}),\ldots ,\pi (F_{k})$ respectively.
Then $P$ is {\it balanced at} $E$ if 
$\sum_{i=1,\ldots ,k}w(F_{i})v_{i}=1$ and
 $P$ is {\it balanced} if $P$ is balanced at $E$ for all 
$(d-1)$-dimensional faces $E$ of $P$.
\end{dfn}




\begin{dfn}
A {\it tropical variety} is
a topological space $X$ with an atlas 
$\{ (U_{\alpha }, \Phi _{\alpha} )\}_{\alpha}$ such that
\begin{itemize}
\item $U_{\alpha }$ is homeomorphic to an open subset of 
an $n$-dimensional balanced rational polyhedral complex with weight $1$ in $\T ^{n_{\alpha}}$
by  $\Phi _{\alpha}:U_{\alpha }\rightarrow \T ^{n_{\alpha}}$, 
which is an isomorphism to its image $\Phi (U_{\alpha})$;
\item $\Phi_{\alpha }(U_{\alpha})\cap \R^{n_{\alpha}}\neq \emptyset$;
\item the transition function on the ``finite part''
$\iota _{n_{\alpha}}^{-1}\circ \Phi _{\alpha}\circ
\Phi _{\beta}^{-1}\circ \iota _{n_{\beta}}$ 
is a restriction of a $\Z$-affine linear map
from $\R^{n_{\beta}}$ to $\R ^{n_{\alpha}}$.
Here $\iota_{n}\colon \R^{n}\to \T^{n}$ is the natural embedding.
\end{itemize}

If every $U_{\alpha}$ is homeomorphic to an open subset of
an $n$-dimensional balanced rational polyhedral complex,
a tropical variety $X$ is called {\it $n$-dimensional}.
Especially, $1$-dimensional tropical varieties are called {\it tropical curves}
and $2$-dimensional tropical varieties are called {\it tropical surfaces}.
\end{dfn}

\begin{dfn}
Let $U$ be an open subset of a tropical variety.
A continuous function $f:U\to \T\cup \{ -\infty \}$ is said to be {\it rational } if
there exist charts $\{ (U_{\alpha_{i}},\Phi_{\alpha_{i}})  \}_{i}$
and tropical polynomials $g_{\alpha_{i}},h_{\alpha_{i}}$ on $\T^{\alpha_{i}}$
satisfying that 
$U=\bigcup U_{\alpha_{i}}$ and $f\circ \Phi_{\alpha_{i}}^{-1}=g_{\alpha_{i}}-h_{\alpha_{i}}$ on 
$\Phi_{\alpha_{i}}(U_{\alpha_{i}})$.

It is said to be {\it regular } if we can take a $\Z$-affine linear function as $h_{\alpha_{i}}$
in the above notation.
\end{dfn}

\begin{dfn}
The sedentarity number of a point $p$ of a tropical variety $X$ is the minimal cardinal of the
sedentarity of $\Phi_{\alpha}(p)\in U_{\alpha}$ when $\alpha$ runs..

A point $p$ of a tropical variety $X$ is said to be {\it inner}
if the sedentarity number of $p$ is zero.
We denote the set of inner points of $X$ by $X^{\circ}$.
\end{dfn}

\subsection{Cartier divisors and line bundles}


\begin{dfn}
A (Cartier) divisor on $X$ is a collection of pairs
$D=\{  (U_{\alpha}, f_{\alpha}) \}_{\alpha}$ of an open subset $U_{\alpha}$ and a rational function $f_{\alpha}$ on $U_{\alpha}$ satisfying that $\bigcup_{\alpha}U_{\alpha}=X$
for any two charts $U_{\alpha}$ and $U_{\beta}$ with $U_{\alpha}\cap U_{\beta}\neq \emptyset$
there is an invertible tropical function $d_{\alpha ,\beta}$ on $U_{\alpha}\cap U_{\beta}$ which coincides with
$f_{\alpha}-f_{\beta}$ at any inner point of $U_{\alpha}\cap U_{\beta}$.
We identify
two divisors $\{  (U_{\alpha}, f_{\alpha}) \}_{\alpha}$, $\{  (V_{\beta}, g_{\beta}) \}_{\beta}$
if $\{  (U_{\alpha}, f_{\alpha}) \}_{\alpha}\cup\{  (V_{\beta}, g_{\beta}) \}_{\beta}$ is also a divisor.
\end{dfn}

We can define the sum of two divisors 
$D=\{  (U_{\alpha}, f_{\alpha}) \}_{\alpha}$ and $D'=\{  (V_{\beta}, g_{\beta}) \}_{\beta}$
to be a divisor 
$D+D':=\{  (U_{\alpha}\cap V_{\beta}, f_{\alpha}+ g_{\beta}) \}_{\alpha,\beta}$.

For a divisor $D=\{  (U_{\alpha}, f_{\alpha}) \}_{\alpha}$, 
$\supp D$ is a set of non-smooth points of $f_{\alpha}$ for some $\alpha$.
We call that a divisor $D=\{  (U_{\alpha}, f_{\alpha}) \}_{\alpha}$ {\it passes} $p$ if $p\in \supp D$.

\begin{rmk}
For a divisor $\{  (U_{\alpha}, f_{\alpha})_{\alpha} \}$,
each $(U_{\alpha}, f_{\alpha})$ defines a weighted balanced polyhedral complex in $U_{\alpha}$
 of codimension one and we can glue these complexes.
That is, a Cartier divisor defines 
a weighted balanced polyhedral complex in $X$ of codimension one.
Conversely a weighted balanced polyhedral complex in $X$ of codimension one defines 
a Cartier divisor. 
Thus we can identify an equivalent class of a Cartier divisor and
a weighted balanced polyhedral complex in $X$ of codmension one.
\end{rmk}

\begin{dfn}
A divisor $D$ is {\it effective}
if there exists a representation $\{ (U_{\alpha}, f_{\alpha}) \}_{\alpha}$ of $D$ such that
all $f_{\alpha}$ are regular functions.
\end{dfn}

Two divisors 
$\{  (U_{\alpha}, f_{\alpha}) \}_{\alpha}$ and
$\{  (V_{\beta}, g_{\beta}) \}_{\beta}$
are {\it linearly equivalent} if there exists a rational function $h$ on $X$ such that
$f_{\alpha}=g_{\beta}+h$ on each $U_{\alpha}\cap V_{\beta}\neq \emptyset$.

For a divisor $D$,
we denote a set of effective divisors linearly equivalent to $D$ by $|D|$.

\begin{dfn}[\cite{LA}, Definitions 1.5]
Let $X$ be an $n$-dimensional tropical variety.
A {\it tropical line bundle} on $X$ is a tuple $(L, \pi ,\{ U_{i},\Psi_{i} \}_{i})$ of a topological space
$L$, a continuous surjection $\pi : L \twoheadrightarrow M$,
an open covering $\{ U_{i} \}$ called the trivializing covering
and homeomorphisms $\Psi_{i}:\pi^{-1}(U_{i})\cong U_{i}\times \T$ 
called trivializations which satisfy:
\begin{itemize}
\item  
The following diagram is commute:
\[\xymatrix{ \pi^{-1}(U_{i}) \ar[r]^{\Psi_{i}} \ar[rd]_{\pi} & U_{i}\times \T 
\ar[d] 
 \\ & U_{i} } .\]
Here the vertical map is the first projection;
\item For every $i,j$ with $U_{i}\cap U_{j}\neq \emptyset$,
there exists an invertible regular function  $\varphi_{ij} \colon U_{i}\cap U_{j}\to \R$ 
such that
$\Psi_{j}\circ\Psi_{i}^{-1}\colon (U_{i}\cap U_{j})\times \T\to (U_{i}\cap U_{j})\times \T$
is given by $(x,t)\mapsto (x,``\varphi_{ij}(x) t")$.
These $\varphi_{ij}$ are called a {\it transition functions}  of $L$.
\end{itemize}
It is clear that 
transition functions $\varphi _{ij}$ satisfies the cocycle condition 
$``\varphi_{ij}\cdot \varphi_{jk}"=\varphi_{ik}$.
We identify two tropical line bundles $(L, \pi ,\{ U_{i},\Psi_{i} \}_{i\in I})$ and $(L, \pi ,\{ U_{j},\Psi_{j} \}_{j\in J})$
if $(L,\pi ,\{ U_{i},\Psi_{i} \}_{i\in I\sqcup J})$ is a tropical line bundle on $M$, that is, 
satisfies the second condition above.
We often write $L$ for $(L, \pi ,\{ U_{i},\Psi_{i} \}_{i})$ to avoid heavy notation.
\end{dfn}

\begin{dfn} 
We take two line bundles 
$(L_{1},\pi _{1}, \{ U_{1i},\Psi_{1i} \})$ , $(L_{2},\pi _{2}, \{ U_{2j},\Psi_{2j} \})$.
Let $\varphi^{1}_{ij}$, $\varphi^{2}_{ij}$ be the correspondence transition functions.
The tensor product $L_{1}\otimes L_{2}$ is defined to be
the tropical line bundle on $X$ whose transition functions are 
$``\varphi^{1}_{ij}\varphi^{2}_{ij}"=\varphi^{1}_{ij}+\varphi^{2}_{ij}$.

The inverse $L_{1}^{-1}$ of the line bundle $L_{1}$ is defined to be the tropical line bundle on $X$
whose transition functions are 
$``(\varphi^{1}_{ij})^{-1}" =-\varphi^{1}_{ij}$.
\end{dfn}

\begin{dfn} 
Let $\pi\colon L\to X$ be a line bundle on a tropical variety $X$ and 
$U$ be an open subset of $X$.
A function $s\colon U\to \pi^{-1}(U)$ is a {\it regular section} (resp. {\it rational section}) on $U$ of $L$
if $\pi\circ s$ is the identity map on $U$ and
$p_{i}\circ \Psi_{i}\circ s$ is a regular (resp. rational) function,
where $p_{i}\colon U_{i}\times \T \to \T$ is the second projection.
If $U=M$ then we call $s$ a global regular (resp. rational) section 
or simply a regular (resp. rational) section.
The set $\Gamma (U,L)$ of regular sections on $U$ of $L$ 
naturally has the structure of a tropical module.
\end{dfn}
A rational section $s$ defines a divisor $\divisor s=\{  (U_{i}, p_{i}\circ\Psi_{i}\circ s) \}_{i}$.
Conversely, a divisor $D=\{  (U_{\alpha}, f_{\alpha}) \}_{\alpha}$ defines a line bundle $\mathcal{O}(D)$.
This correspondence is compatible for a sum of divisors and a tensor product of line bundles.

Assume $\Gamma (X,\mathcal{O}(D))\neq \emptyset$ and
fix a regular section  $s\in \Gamma (X,\mathcal{O}(D))$.
For any regular section $s'$, we can define a rational function $f=``\frac{s'}{s}"$.
Since $s$ and $s'$ are regular, $\divisor (f)+D\geq 0$.
Then we get a map $\Gamma (X,\mathcal{O}(D))\to \{ f\in \mathcal{M}(X)\mid \divisor (f)+D\geq 0 \}$.
It is clear that this map is
an isomorphism of $\T$-modules. Thus we get the following proposition.

\begin{prop}\label{section}
\[ \Gamma (X,\mathcal{O}(D))\cong \{ f\in \mathcal{M}(X)\mid \divisor (f)+D\geq 0 \}. \]
\end{prop}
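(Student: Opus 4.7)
The plan is to construct the isomorphism via the canonical rational section of $\mathcal{O}(D)$ coming from the defining data of $D$. Writing $D=\{(U_\alpha,f_\alpha)\}_\alpha$ and recalling that $\mathcal{O}(D)$ is built from the transition data $\varphi_{\alpha\beta}=f_\alpha-f_\beta$, the collection $\{f_\alpha\}$ assembles into a distinguished, generally only rational, section $s_D$ of $\mathcal{O}(D)$ whose local representatives are the $f_\alpha$, so that $\divisor(s_D)=D$ tautologically. In what follows I would work with $s_D$ rather than with an arbitrary fixed regular section $s$; this avoids the extra assumption that a regular section exists and is otherwise equivalent to the sketch given before the statement (the two choices differ by the rational function $s/s_D$).

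Using $s_D$, I would show that any rational section $s'$ of $\mathcal{O}(D)$ admits a unique expression $s'=``f\cdot s_D"$ for a globally defined rational function $f\in\mathcal{M}(X)$. Locally $s'_\alpha=``f\cdot f_\alpha"=f+f_\alpha$, and the expression $s'_\alpha-f_\alpha$ is chart-independent because $s'_\alpha$ and $f_\alpha$ transform by the same transition function on overlaps. This produces a $\T$-linear bijection between all rational sections of $\mathcal{O}(D)$ and $\mathcal{M}(X)$, compatible with the tropical sum $``s_1'+s_2'"$ and the tropical scalar product $``t\cdot s'"$ because these operations commute chart-wise with subtracting off $f_\alpha$.

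Finally, I would characterize when such an $s'$ is regular. The section $s'$ is regular precisely when each $s'_\alpha=f+f_\alpha$ is a regular function on $U_\alpha$, which by the identification of divisors with weighted balanced polyhedral complexes of codimension one (recorded in the remark following the definition of Cartier divisors) translates exactly into $\divisor(f)+D\geq 0$. Restricting the $\T$-linear bijection of the previous paragraph to regular sections then yields the desired isomorphism.

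The main obstacle will be a careful handling of tropical sign and summation conventions: one has to verify that the identity $\divisor(``f\cdot s_D")=\divisor(f)+D$ holds after gluing local representatives into global divisors, and that the $\T$-module operations on sections really do correspond to $\T$-module operations on the associated rational functions (rather than, say, introducing an extra rational correction when restricting to $X^{\circ}$). Once this dictionary between divisors, line bundles, and rational sections is pinned down, the proposition reduces to two essentially tautological identifications built into the very construction of $\mathcal{O}(D)$ from $D$.
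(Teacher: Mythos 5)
Your proof is correct, and it follows the same basic template as the paper's one-paragraph argument preceding the statement: pick a reference section, tropically divide an arbitrary regular section by it to obtain a rational function, and check the divisor condition. The one genuine difference is your choice of reference section. The paper fixes an arbitrary \emph{regular} section $s\in\Gamma(X,\mathcal{O}(D))$, which forces the preliminary assumption $\Gamma(X,\mathcal{O}(D))\neq\emptyset$ and, strictly speaking, only yields $\divisor(f)+\divisor(s)\geq 0$ (with $\divisor(s)$ merely linearly equivalent to $D$), so the target set is identified only up to the translation by $``\frac{s_D}{s}"$. Your use of the tautological rational section $s_D$ with local representatives $f_\alpha$, so that $\divisor(s_D)=D$ on the nose, removes the nonemptiness hypothesis and produces the stated set $\{f\in\mathcal{M}(X)\mid\divisor(f)+D\geq 0\}$ exactly rather than up to a twist; the chart-independence of $s'_\alpha-f_\alpha$ and the compatibility with the $\T$-module operations are verified chart-wise exactly as you say. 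In short, your variant is slightly more careful and more general than the paper's, at the cost of having to handle a rational (rather than regular) reference section; both arguments are essentially tautological unwindings of the construction of $\mathcal{O}(D)$ from $D$.
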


At the end of this section, we note the following proposition.
\begin{prop}\label{gen}[\cite{HMY}, Corollary 8]
If a submodule $M$ of
the tropical module $\Gamma( X,\mathcal{O}(D))$ is finitely generated, 
this is generated by extremals.
This  generating set is minimal and unique up to tropical scalar multiplication.
\end{prop}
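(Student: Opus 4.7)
The plan is to verify that in any finitely generated tropical submodule $M\subseteq \Gamma(X,\mathcal{O}(D))$, the extremals (one per ray, up to tropical scalar) form the unique minimal generating set. I would proceed in three stages: characterize extremals of $M$ in terms of an arbitrary finite generating set, show that every generator decomposes as a tropical sum of extremals, and then deduce minimality and uniqueness.

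First, starting from any finite generating set $\{g_{1},\dots ,g_{n}\}$ of $M$, I would show that every extremal $m\in M$ is a tropical scalar multiple of some $g_{i}$. Writing $m=``\sum_{i}\lambda_{i}g_{i}"$ and iteratively applying the extremality condition to splittings of this tropical sum into two pieces ($\lambda_{1}g_{1}$ versus the rest, then $\lambda_{2}g_{2}$ versus the remaining tail, and so on), one reduces the sum term by term, concluding $m=``\lambda_{j}g_{j}"$ for some $j$. Consequently the set of extremal rays of $M$ is finite, with at most $n$ rays.

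Next, I would show that every generator $g_{i}$ is itself a tropical sum of extremals of $M$. If $g_{i}$ is not already extremal, decompose $g_{i}=``a+b"$ with $a,b\in M$ both strictly distinct from $g_{i}$, so that $a,b\leq g_{i}$ pointwise with strict inequality somewhere. The delicate point, and the main obstacle, is to argue that iterating such decompositions terminates at extremals: tropical decomposition is not canonical, and infinite descent must be ruled out. My strategy is to use the first step to bound the set of rays that can appear, interpret extremals as the join-irreducible elements under the pointwise order inherited from $\T$, and exploit finite generation to show that every element of $M$ eventually expresses as a tropical sum of representatives of the finitely many extremal rays identified in the first step.

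Once these ingredients are in place, the remaining claims are routine. Minimality: if an extremal representative $e$ could be removed from the extremal generating set, it would have to be expressible as a tropical sum of the remaining extremals $e_{1},\dots ,e_{r}$; but then by the argument of the first step $e$ must be a tropical scalar multiple of some $e_{j}$, contradicting that the $e_{j}$ lie on rays distinct from that of $e$. Uniqueness up to tropical scalar: applied to any proposed generating set, the first step shows that each extremal ray must have a representative inside it, so the minimal such set consists of exactly one representative per ray, and any two minimal generating sets therefore differ only by tropical scalar multiples term by term.
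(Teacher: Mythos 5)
Note first that the paper does not actually prove this proposition: it cites \cite{HMY}, Corollary 8, and remarks only that the general case ``is proved the same way,'' so there is no in-paper argument to compare against; your proposal has to stand on its own.

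Your first step is correct and is the standard argument: if $m$ is extremal and $m=``\sum_i\lambda_ig_i"$, peeling off one term at a time and applying the extremality condition to each two-term splitting forces $m=``\lambda_jg_j"$ for some $j$. The genuine gap is in your second step, and you have in fact located it yourself: showing that every generator is a tropical sum of extremals by repeatedly decomposing $g_i=``a+b"$ requires ruling out infinite descent, and your proposed resolution (``exploit finite generation to show that every element of $M$ eventually expresses as a tropical sum of representatives of the finitely many extremal rays'') is not an argument but a restatement of the conclusion. Step 1 only tells you that extremals, if any, sit on at most $n$ rays; it gives no control over the chains of decompositions of a non-extremal generator, whose summands $a,b$ need not be generators at all. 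The standard way to close this gap inverts the logic: starting from the finite generating set, delete one redundant generator at a time (one that is a tropical combination of the others); this terminates by finiteness and yields a minimal generating set $G'$. One then shows directly that every $g\in G'$ is extremal: if $g=``a+b"$ with $a,b\neq g$, write $a=``\sum\lambda_ig_i"$ and $b=``\sum\mu_ig_i"$ over $G'$, so $g=``\sum\nu_ig_i"$ with $\nu_i=\max(\lambda_i,\mu_i)$; since $a,b\leq g$ pointwise one gets $\nu_g\leq 0$, and $\nu_g=0$ forces $a=g$ or $b=g$ (contradiction), while $\nu_g<0$ lets you drop the $g$-term and express $g$ in terms of $G'\setminus\{g\}$, contradicting minimality. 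Combined with your step 1, this gives generation by extremals, minimality, and uniqueness up to tropical scalar exactly as in your third step, which is fine once step 2 is repaired.
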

Precisely this proposition is a generalization of \cite{HMY}, Corollary 8,
and proved as the same way.

\section{Rank of divisor}

We consider a tropical variety $X$ with an atlas $\mathcal{V}=\{ V_{\beta} ,\Phi_{\beta} \}$
and a Cartier divisor $D$ on $X$. 

\begin{dfn}[Cartwright \cite{DC2}, Definition 3.1]
Let $X$ be a tropical variety and $D$ be a divisor on $X$.
We define $h^{0}(X,D)$ as
\[ h^{0}(X,D):=\min\left\{ k\in \Z_{\geq 0} \middle| 
\begin{aligned}
&\text{there exist $k$ points } p_{1},\ldots ,p_{k}, \text{ such that } \\
~&\hspace{0mm}\text{there is no divisor }
E\in |D| \text{ which passes all }p_{i}
\end{aligned}
\right\} .\]
If there is no such $k$, then we define $h^{0}(X,D)$ as $\infty$.
\end{dfn}
The value of $h^{0}(X,D)$ is difficult to compute in general.
In order to estimate this, we define two amounts $h_{a}^{0}(X,D)$ and $h_{b}^{0}(X,D)$.

Let $M$ be a finitely generated submodule of $\Gamma( X,\mathcal{O}(D))$ and
$s_{1},\ldots ,s_{k}$ be a minimal generating set of $M$.
By Proposition \ref{gen}, this minimal generating set is unique up to tropical multiplications.
Thus we can define
$U(M)$ to be the complement of
$\cup _{i=1}^{k}\supp (\divisor s_{i})$
and let $\{ U(M)_{\alpha}\}$ to be the set of connected components of $U(M)$.
 
Then restriction of generators
$s_{i}|_{U(M)_{\alpha}\cap V_{\beta}}$ are affine linear functions 
on each open subset $U(M)_{\alpha}\cap V_{\beta}$.
Thus we can assume that
$M|_{U(M)_{\alpha}\cap V_{\beta}}$
is a tropical submodule of
$\T [ x_{1},\ldots ,x_{n} ]$.
Here $x_{1},\ldots ,x_{n}$ are the coordinates of the ambient space of $\image \varphi_{\beta}$.

For $y\in U(M)_{\alpha}\cap V_{\beta}$,
we define $l_{M}(y)$ to be the number of monomials in $M|_{U(M)_{\alpha}\cap V_{\beta}}$ up to tropical multiplications.
It is clear that this number is independent for the choice of $\alpha, \beta$.
For $x\notin U(M)$, we define $U(M)_{x}$ to be
the union of $U_{\alpha}$ such that its closure contains $x$,
and define
\[ l_{M}(x):=\max_{y\in U_{x}}l_{M}(y) \]


\begin{dfn}\label{hahb}
We define two numbers $h^{0}_{a}(X,D)$ and $h^{0}_{b}(X,D)$ as the following;
\[ h^{0}_{a}(X,D):=
\max\left\{ k\in \Z_{\geq 0} \middle| 
\begin{aligned}
&\exists \text{ $k$ sections } s_{1},\ldots ,s_{k}, \text{ s.t. } \\
~&\hspace{0mm}\forall x\in X, l_{< s_{1},\ldots ,s_{k} >}(x)=k
\end{aligned}
\right\} \]

\begin{eqnarray*}
 h^{0}_{b}(X,D)&:=& \max\left\{ k\in \Z_{\geq 0} \middle| 
\begin{aligned}
&\forall x\in X, \exists \text{ $k$ sections } s_{1},\ldots ,s_{k}, \\
~&\hspace{0mm}\text{ s.t. } l_{< s_{1},\ldots ,s_{k} >}(x)=k
\end{aligned}
\right\} \\
&=&\min_{x\in X}\{ l_{\Gamma (X,\mathcal{O}(D))(x) } \}
\end{eqnarray*}
\end{dfn}

\begin{rmk}
These numbers are more easy to compute then $h^{0}(X,D)$
when we have a concrete expression of
a minimal generating set of $\Gamma (X,\mathcal{O}(D))$.
Moreover, these coincide the cardinal of a minimal generating set of $\Gamma (X,\mathcal{O}(D))$
 when the generators of $\Gamma (X,\mathcal{O}(D))$ have different slopes
each other around any point $x\in X$.
\end{rmk}

\begin{prop}[Main result 1]\label{Mainresult1}
\[ h^{0}_{a}(X,D)\leq h^{0}(X,D)\leq h^{0}_{b}(X,D)\]
\end{prop}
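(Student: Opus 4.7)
The plan is to establish the two inequalities separately; both rely on tropical-determinant / genericity arguments, though packaged differently.

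For the first inequality $h^{0}_{a}(X,D) \leq h^{0}(X,D)$, I argue contrapositively. Suppose sections $s_{1},\ldots,s_{k}$ realize $h^{0}_{a} \geq k$, with $l_{M}(x) = k$ for every $x \in X$ where $M = \langle s_{1},\ldots,s_{k}\rangle$. I aim to show $h^{0}(X,D) \geq k$, i.e., any $k-1$ given points $p_{1},\ldots,p_{k-1}$ admit some $E \in |D|$ passing through them all. The candidate is $E = \divisor(t)+D$ where $t = ``\sum_{i} a_{i} s_{i}"$ is a tropical linear combination of the $s_{i}$; this is automatically a regular section, so $E$ is effective. The divisor $\divisor(t)$ passes through $p_{j}$ precisely when at least two of the terms $a_{i}+s_{i}$ tie for the tropical maximum at $p_{j}$, yielding $k-1$ coincidence conditions on the $k$ parameters $a_{i}$ (modulo overall scalar). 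Because $l_{M}(x)=k$ everywhere forces the $s_{i}$ to have pairwise distinct slopes at every point, the resulting linear system is nondegenerate, and a sufficiently generic solution produces a $t$ with the required corners.

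For the second inequality $h^{0}(X,D) \leq h^{0}_{b}(X,D)$, I again use the contrapositive: I show that whenever $l_{\Gamma}(x_{0}) = r$ for some $x_{0}$, one has $h^{0}(X,D) \leq r$, from which $h^{0} \leq \min_{x} l_{\Gamma}(x) = h^{0}_{b}$ follows. On a small smooth neighborhood $U$ of (a suitable smooth point near) $x_{0}$, the module $\Gamma(X,\mathcal{O}(D))|_{U}$ contains exactly $r$ distinct monomials $L_{1},\ldots,L_{r}$ up to scalar. I select $r$ points $q_{1},\ldots,q_{r} \in U$ in general position so that (i) each $q_{j}$ lies off $\supp D$, and (ii) the tropical $r\times r$ matrix $(L_{i}(q_{j}))$ has all $r!$ permutation sums pairwise distinct. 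Any section $s$ has local representation $\max_{i}(c_{i}+L_{i})$ on $U$, and $\divisor(s)+D$ passes through $q_{j}$ iff at least two of the terms $c_{i}+L_{i}(q_{j})$ tie for the maximum. If this held at every $q_{j}$, the tying sets $I_{j}=\{i : c_{i}+L_{i}(q_{j}) = \max\}$ would all satisfy $|I_{j}| \geq 2$; a standard bipartite-matching argument (Hall's theorem together with the fact that a perfect matching in a bipartite graph whose every left-vertex has degree $\geq 2$ is never unique) would yield two distinct systems of distinct representatives $\sigma \neq \sigma'$ from the $I_{j}$, forcing $\sum_{j} L_{\sigma(j)}(q_{j}) = \sum_{j} L_{\sigma'(j)}(q_{j})$ and contradicting (ii). Hence some $q_{j}$ escapes $\divisor(s)+D$, so no $E \in |D|$ can contain all of $q_{1},\ldots,q_{r}$.

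The main obstacles will be, first, making the general-position argument for the $a_{i}$-system precise, in particular arranging that the prescribed tying indices at each $p_{j}$ genuinely attain the max instead of being dominated by a third term (this is where the pairwise-distinct-slopes hypothesis is used most delicately), and, second, justifying the SDR extraction step for the second inequality, including the verification of Hall's condition and the passage from $|I_{j}| \geq 2$ everywhere to two honest distinct matchings in the tropical setting.
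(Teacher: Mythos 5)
Your first inequality follows the same strategy as the paper's: given $p_{1},\ldots,p_{k-1}$, choose coefficients $a_{i}$ so that the regular section $``\sum_{i}a_{i}s_{i}"$ has a corner at every $p_{j}$. But the existence of such coefficients is the entire content of this step, and you leave it as an acknowledged ``obstacle'' rather than proving it. The conditions are tropical, not linear: each one says that the point $(a_{1},\ldots,a_{k})$ lies on the corner locus of the tropical linear form $\max_{i}(a_{i}+s_{i}(p_{j}))$, and the nonemptiness of the intersection of these $k-1$ tropical hyperplanes (modulo the overall scalar) is a genuine theorem, not a genericity statement. The paper supplies the solution explicitly: $a_{i}$ is the maximal minor $\max_{\sigma,\,\sigma(1)=i}\sum_{j}s_{\sigma(j)}(p_{j-1})$ of the matrix $(s_{i}(p_{j}))$ (a tropical Vandermonde/Cramer construction), and a permutation-swap argument shows the maximum at each $p_{j}$ is then attained by at least two indices. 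Without this (or an equivalent citation of tropical Cramer's rule), your first half is a plan, not a proof.

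The second inequality has a more serious gap. From $|I_{j}|\geq 2$ for every $j$ you cannot conclude that a system of distinct representatives exists: Hall's condition can fail (take $I_{1}=\cdots=I_{r}=\{1,2\}$ with $r\geq 3$), and then there is no perfect matching at all, let alone two. Your auxiliary fact --- a perfect matching in a bipartite graph with all left degrees $\geq 2$ is never unique --- is true but only applies once a matching exists, and nothing in your setup guarantees that. The statement that would rescue your route is the tropical-rank theorem (tropical dependence of the rows forces the maximum in the tropical determinant to be attained by two permutations), which is a real but nontrivial theorem and not the Hall argument you sketch. The paper sidesteps all of this: it records only one tying pair per point, obtaining a multigraph with $L$ vertices and $L$ edges, which must contain a cycle; the cycle yields a telescoping identity $\sum_{j}\bigl(s_{i_{j}}(p_{k_{j}})-s_{i_{j+1}}(p_{k_{j}})\bigr)=0$, which is excluded for points chosen off finitely many codimension-one loci. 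Note that such a cycle identity is exactly the equality of two permutation sums differing by a cyclic permutation, so your genericity condition (ii) already rules it out; replacing your SDR extraction by this cycle argument would close the gap with no other changes.
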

\begin{proof}
When $\Gamma (X,\mathcal{O}(D))=\{ -\infty \}$ or 
this is generated only one element, 
it is clearly that $h^{0}(X,D)=h^{0}_{a}(X,D)=h^{0}_{b}(X,D)$.
Thus we assume that $\Gamma (X,\mathcal{O}(D))\neq \{ -\infty\}$ and 
$\Gamma (X,\mathcal{O}(D))$ is generated by $2$ or more elements. 


Let $l:=h^{0}_{a}(X,D)$. By definition of $h^{0}_{a}(X,D)$, we can take $l$ sections
$s_{1},\ldots, s_{l}$ satisfying that
$l_{<s_{1},\ldots, s_{l}>}(x)=l$ for all $x\in X$.
In order to show $h^{0}(X,D)\geq l$, we take arbitrary $l-1$-points $p_{1},\ldots ,p_{l-1}\in X$.

Now we define a regular section of $\mathcal{O}(D)$ by the Vandermonde determinant
\begin{align*}
V_{D}(x; p_{1},\ldots ,p_{l-1})&:=
\operatorname{det}_{\text{trop}}
\left( \begin{array}{ccc} 
s_{1}(x)&\cdots &s_{l}(x)\\ 
s_{1}(p_{1})&\cdots &s_{l}(p_{1})\\
\vdots&\ddots&\vdots\\
s_{1}(p_{l-1})&\cdots &s_{l}(p_{l-1})
\end{array}\right)\\
&=\max_{\sigma\in \mathfrak{S}_{l}}\left\{ s_{\sigma (1)}(x)+
\sum_{i=2}^{l}s_{\sigma (i)}(p_{i-1}) \right\}\\
&=\max_{i\in \{ 1,\ldots ,l\}  }\left\{ s_{i}(x)+
\max_{\sigma\in \mathfrak{S}_{l}, \sigma(1)=i }\sum_{j=2}^{l}s_{\sigma (j)}(p_{j-1}) \right\}.
\end{align*}


\begin{lem}
 $\operatorname{div }V_{D}(x; p_{1},\ldots ,p_{l-1})$ passes through the $l-1$ points $p_{1},\ldots, p_{l-1}$.
\end{lem}
\begin{proof}
Let us show that $\operatorname{div }V_{D}(x; p_{1},\ldots ,p_{l-1})$ passes through $p_{1}$.
It is clear when the value $\operatorname{div }V_{D}(p_{1}; p_{1},\ldots ,p_{l-1})$ is infinite,
thus we assume that $\operatorname{div }V_{D}(p_{1}; p_{1},\ldots ,p_{l-1})$ is finite.

Suppose that the maximum in $\operatorname{div }V_{D}(p_{1}; p_{1},\ldots ,p_{l-1})$ is realized by
just one element $i_{max}$. We can assume this $i_{max}$ is $1$.
Let $\sigma_{1}$ be an element satisfying that 
$\max_{\sigma\in \mathfrak{S}_{l}, \sigma(1)=1 }\sum_{i=2}^{l}s_{\sigma (j)}(p_{j-1})
=\sum_{i=2}^{l}s_{\sigma_{i} (1)}(p_{j-1})$, that is,
$\operatorname{div }V_{D}(p_{1}; p_{1},\ldots ,p_{l-1})=
s_{1}(p_{1})+s_{\sigma_{1} (1)}(p_{1})+s_{\sigma_{1} (2)}(p_{2})+\cdots +s_{\sigma_{1} (l-1)}(p_{l-1})$.
Then the following equality holds;
\begin{align*}
V_{D}(p_{1}; p_{1},\ldots ,p_{l-1})&=
s_{1}(p_{1})+\max_{\sigma\in \mathfrak{S}_{l}, \sigma(1)=1 }\sum_{j=2}^{l}s_{\sigma (j)}(p_{j-1})\\
&=s_{\sigma_{1}(2)}(p_{1})+
\max_{\sigma\in \mathfrak{S}_{l}, \sigma(1)=\sigma_{1}(2) }
\sum_{j=2}^{l}s_{\sigma (j)}(p_{j-1}).
\end{align*}
That is, the maximum in $\operatorname{div }V_{D}(p_{1}; p_{1},\ldots ,p_{l-1})$ is realized by
two elements $i=1$ and $i=\sigma_{1}(2)$. This is contradiction.

Thus the maximum in $\operatorname{div }V_{D}(p_{1}; p_{1},\ldots ,p_{l-1})$ is realized by
two elements $i_{1},i_{2}$.
Then $\operatorname{div }V_{D}(x; p_{1},\ldots ,p_{l-1})$ passes $p_{1}$ 
since the slopes of $s_{i_{1}}$ and $s_{i_{2}}$ are distinct by definition of $s_{i}$'s.
\end{proof}

By the above lemma, we get $h^{0}(X,D)\geq l$.

Next, we show that $h^{0}(X,D)\leq h^{0}_{b}(X,D)$.
Let $L:=h^{0}_{b}(X,D)$.
By definition of $h^{0}_{b}(X,D)$ and $l_{\Gamma (X,\mathcal{O}(D)) }(x)$, 
there is a point $x'\in U(\Gamma (X,\mathcal{O}(D)))$
 satisfying that $l_{\Gamma (X,\mathcal{O}(D)) }(x')=L$.

Suppose $h^{0}(X,D)\geq L+1$ and take general $L$ points $p_{1},\ldots ,p_{L}$ near $x'$.
That is, if $x'\in U(\Gamma (X,\mathcal{O}(D)))_{\alpha}$, we take $L$ points in
the same component $U(\Gamma (X,\mathcal{O}(D)))_{\alpha}$.
Then 
we can take $L$ sections $s_{1},\ldots ,s_{L}$ and a global section
$s := \max_{1\leq i\leq L}\{  s_{i}(x)+t_{i}\}$ passing through all $p_{i}$. 

Now we define a multigraph $\Gamma$ whose vertex set is $\{ 1,\ldots ,L \}$.
For each $k=1,\ldots ,L$, we can choose two distinct elements $i_{k}\neq j_{k}\in \{ 1,\ldots ,L\}$
such that $s_{i_{k}}(p_{k})+t_{i_{k}}=s_{j_{k}}(p_{k})+t_{j_{k}}$.
We connect $i_{k}$ and $j_{k}$ by an edge for each $k=1,\ldots ,L$.
Then this multigraph $\Gamma$ has $L$ vertices and $L$ edges, thus it has a cycle.


Assume $1,\ldots ,k$ make a cycle. Then we get
\begin{align*}
s_{1}(p_{i_{1}})+t_{1}&=s_{2}(p_{i_{1}})+t_{2}\\
s_{2}(p_{i_{2}})+t_{2}&=s_{3}(p_{i_{2}})+t_{3}\\
~&\vdots \\
s_{k-1}(p_{i_{k-1}})+t_{k-1}&=s_{k}(p_{i_{k-1}})+t_{k}\\
s_{k}(p_{i_{k}})+t_{k}&=s_{1}(p_{i_{k}})+t_{1}
\end{align*}
for some $p_{i_{1}},\ldots p_{i_{k}}$. By combining these equations, we get
\[ \left( s_{1}(p_{i_{1}})-s_{2}(p_{i_{1}})\right)
+\cdots 
+\left( s_{k}(p_{i_{k}})-s_{1}(p_{i_{k}})\right)=0. \]

More generally, we define the functions $l_{i_{1},\ldots ,i_{k}}$ for $k>1$ and
 a subset $\{ i_{1},\ldots ,i_{k}\}\subset \{ 1,\ldots ,L \}$ as follows:
\[ l_{i_{1},\ldots ,i_{k}}(x_{1},\ldots x_{k})
:=\sum_{j=1}^{k}\left(
s_{i_{j}}(x_{j})-s_{i_{j+1}}(x_{j})
\right),
 \]
where $i_{k+1}:=i_{1}$.
These functions are rational functions on $X^{k}$.
To summarize the above argument, we obtain the following lemma.
\begin{lem}\label{l0}
Suppose $h^{0}(X,D)\geq L+1$. \\
Then for general $L$ points $p_{1}\ldots ,p_{L}$ near $x'$,
there exist ordered sequences $\{ i_{1},\ldots ,i_{k}\}, \{ j_{1},\ldots ,j_{k}\}\subset  \{ 1,\ldots ,L \}$
such that
$ l_{i_{1},\ldots ,i_{k}}(p_{j_{1}},\ldots p_{j_{k}})=0 $.
\end{lem}
The equations $l_{i_{1},\ldots ,i_{k}}(x_{j_{1}},\ldots x_{j_{k}})=0$
define a codimension $1$ topological subspace of $X^{L}$.
Thus
when we take sufficiently general $L$ points $p_{1},\ldots ,p_{L}$ near $x'$,
for any ordered sequences $\{ i_{1},\ldots ,i_{k}\}$ and $\{ j_{1},\ldots ,j_{k}\}$ of $\{ 1,\ldots ,L \}$,
$ l_{i_{1},\ldots ,i_{k}}(p_{j_{1}},\ldots p_{j_{k}})\neq 0$.\\
This contradicts to the above Lemma.
Thus we get $h^{0}(X,D)< L+1$.

We define subsets $H_{1}$, $H_{k}\subset (X)^{L}$ , $k=2,\ldots ,L$ as below.
\[ 
H_{1}:=\left\{ ( x_{1},\ldots ,x_{L} )\in (X)^{L} \middle| x_{i} \notin U(\Gamma(X,\mathcal{O}(D)))
 \text{ for some }i \right\}.
\]
\[
H_{k}:=\left\{ ( x_{1},\ldots ,x_{L} )\in (X)^{L} \middle| 
\begin{aligned}
\exists \text{an ordered sequence }\{ i_{1},\ldots ,i_{k}\}\text{ of elements in } \{ 1,\ldots ,L \}\\
~&\hspace{-87mm}\text{and a subset }\{ x_{j_{1}}, \ldots ,x_{j_{k}} \}\subset \{ x_{1},\ldots ,x_{L} \}\\ 
~&\hspace{-87mm} \text{satisfying } l_{i_{1},\ldots ,i_{k}}(x_{j_{1}},\ldots x_{j_{k}})=0
\end{aligned}\ \ \ \ \ \ \ \ \ \ \ \ \ \ \ \ 
\right\}.
\]

It is clear that the dimension of $H_{1}$ is $Ln-1$.
The dimension of $H_{k}$,for $k=2,\ldots ,l$ is also $Ln-1$
since functions $l_{i_{1},\ldots ,i_{k}}$ are rational functions on $X^{L}$.
Thus $X^{L}\setminus \cup_{k=1}^{L}H_{k}$ is nonempty.
That is, for general $L$ points $p_{1},\ldots ,p_{L}$ in $X^{L}\setminus \cup_{k=1}^{L}H_{k}$,
$ l_{i_{1},\ldots ,i_{k}}(p_{j_{1}},\ldots p_{j_{k}})\neq 0$ 
for any subset $\{ p_{j_{1}},\ldots p_{j_{k}} \}$ of these $L$ points 
and any ordered sequence $\{ i_{1},\ldots ,i_{k}\}$ of $\{ 1,\ldots ,L \}$.
This contradicts Lemma \ref{l0}.
Thus we get $h^{0}(X,D)< L+1$. 
\end{proof}

\begin{rmk}
In tropical geometry, the Identity theorem does not hold.
this facts seems to be related to whether or not
$h^{0}(X, D)$ is equal to $\dim \Gamma(X,\mathcal{O}(D))$.
\end{rmk}





\section{Tropical toric varieties}
Let $N\cong \Z^n$ be a lattice, $M:=N^{\vee}=\operatorname{hom}(N,\Z)$ be the dual lattice of $N$, 
and $N_{\R}:=N\otimes \R$, $M_{\R}:=M\otimes \R$ be the correspondence real vector spaces.

\begin{dfn}
Let $V=\Z^{n}\otimes \R$ be a real vector space with a lattice $\Z^{n}$.
A {\it rational polyhedral cone} in V is a cone in $V$ defined by
an intersection of finite half planes
$\left\{  x\in V \mid a_{i}(x)\geq 0 \right\}$ for some $a_{i}\in (\Z^{n})^{\vee}$.

A {\it face of $\sigma$} is a subset $\tau\subset \sigma$ such that
$\psi_{i} (\tau \cap U_{i})$ is an intersection of $\operatorname{Im}\psi_{i}$
and a face of $\rho_{i}$ in $\R^{n}$ for each $i$.
It is denoted by $\tau\prec \sigma$.

A {\it fan} $\Sigma$ in $V$ is a finite collection of 
rational polyhedral cones in $V$ satisfying the following:
\begin{itemize}
\item Any face of $\sigma \in \Sigma$ belongs to $\Sigma$;
\item For $\sigma ,\tau\in \Sigma$ with $\sigma\cap \tau\neq \emptyset$,
the intersection is a face of both $\sigma$ and $\tau$. 
\end{itemize}
We denote $\Sigma^{[k]}$ the set of cones in $\Sigma$ of dimension $k$.

A cone $\sigma$ is said to be {\it smooth} if it is generated by
a subset of a basis of $\Z^{n}$.
A fan $\Sigma$ is said to be {\it smooth} if every cone of $\Sigma$ is smooth.

A {\it dual cone} $\sigma^{\vee}$ of a cone $\sigma$ is a cone defined by
$\left\{ a\in V^{\vee}\mid  a(x)\geq 0\ \forall x\in \sigma  \right\}$.
If $\sigma$ is a rational polyhedral cone, then $\sigma^{\vee}$ is the same.
We define $S_{\sigma}$ to be $\sigma^{\vee}\cap M$.
\end{dfn}

\begin{dfn}[\cite{HM}, Definition 1.12]
Let $S\subset \Z^{n}$ be a semigroup generating by finite sets $\{ g_{1},\ldots ,g_{k} \}$
and $R\subset \Z^{k}$ be a generating set of $\{  r\in \Z^{k}\mid  \sum r_{i}g_{i}=0  \}$.
We define
\[ K(S):=\{  x\in \T^{k}\mid  x\cdot r^{+} = x\cdot r^{-}"\ \forall r\in R \} \]
where 
$r^{+}:=(\max (r_{1},0),\ldots , \max (r_{k},0))$ and
$r^{-}:=(\max (-r_{1},0),\ldots , \max (-r_{k},0))$ for $r=(r_{1},\ldots ,r_{k})$.
\end{dfn}

By Lemma 1.13 of \cite{HM},
a set $U_{\sigma}:=\operatorname{Hom}(S_{\sigma}, \T)$ is identified with 
$K(S_{\sigma })\subset \T^{k}$ naturally
and this identification gives $\operatorname{Hom}(S_{\sigma}, \T)$ a structure of tropical variety.
We call this tropical variety $U_{\sigma}$ a {\it tropical affine toric variety}.

Let $\tau$ be a face of a cone $\sigma$.
Then the inclusion $\tau\hookrightarrow \sigma$ induces an inclusion 
$\iota_{\tau, \sigma}\colon U_{\tau}\hookrightarrow U_{\sigma}$.

\begin{dfn}%
Let $\Sigma$ be a fan on $N_{\R}$.
A {\it tropical toric variety} $X_{\Sigma}$ is a tropical variety $(\coprod U_{\sigma})/\sim$,
where $x\sim y$ means that there exist three cones $\sigma_{1}, \sigma_{2}, \tau$
and $z\in U_{\tau}$
such that $\tau\subset \sigma_{1}\cap \sigma_{2}$,
$x\in U_{\sigma}$, $y\in U_{\sigma '}$, $\iota_{\tau, \sigma}(z)=x$ and
 $\iota_{\tau, \sigma'}(z)=y$.
A tropical toric variety $X_{\Sigma}$ is {\it smooth} if each cone of $\Sigma$ is smooth. 
\end{dfn}

In the following, we consider only smooth tropical toric varieties.
\begin{dfn}
Let $\sigma$ be a smooth cone in $N_{\R}$ and $\{ \rho_{1},\ldots ,\rho_{k}\}$ be
the set of rays of $\sigma$.
Let $e_{\rho_{i}}\in N$ be a primitive generator of $\rho_{i}$.
We fix a dual frame $\{ m_{\rho_{1}},\ldots ,m_{\rho_{k}}\} \subset M$
 of $\{ e_{\rho_{1}},\ldots ,e_{\rho_{k}} \}$, that is,
$m_{\rho_{i}}\in M$ be an element $m\in M$ satisfying that
$m.\cdot e_{\rho_{i}}=1$ and $m.\cdot e_{\rho_{j}}=0$ for $j\neq i$.
Then we define a regular function $\varphi_{\rho_{i},\sigma}$ on $U_{\sigma}$ by
$\varphi_{\rho_{i},\sigma}(x):=x(m_{\rho_{i}})$ for 
$x\in U_{\sigma}=\operatorname{Hom}(\sigma^{\vee}\cap M, \T)$.
\end{dfn}

Let $\Sigma$ be a smooth fan, $\rho$ be a ray of $\Sigma$, 
$\sigma_{1},\ldots ,\sigma_{l}$ be cones of $\Sigma$ with $\rho\prec \sigma_{i}$.
Then on $U_{\sigma_{i}}\cap U_{\sigma_{j}}$ for $i\neq j$, 
the difference of regular functions $\varphi_{\rho, \sigma_{i}}$ and $\varphi_{\rho, \sigma_{j}}$ are 
an invertible regular function on $U_{\sigma_{i}}\cap U_{\sigma_{j}}$.
Thus we define a {\it ray divisor} $D_{\rho_{i}}$ on $X_{\Sigma}$
to be a divisor represented by $( U_{\sigma}, \varphi_{\rho_{i}, \sigma} )$
and a {\it toric divisor} to be a 
divisor of the form $\sum_{\rho\in \Sigma^{1}}a_{\rho}D_{\rho}$, $a_{\rho}\in \Z$.

\begin{dfn}
Let $\Sigma$ be a smooth fan with rays $\{ \rho_{1},\ldots , \rho_{d} \}$.
The {\it canonical divisor} $K_{X_{\Sigma}}$ on $X_{\Sigma}$ is a divisor $\displaystyle -\sum_{i=1}^{d}D_{\rho_{i}}$.
\end{dfn}

\begin{rmk}
Mikhalkin\cite{M} defines the canonical class for general tropical varieties.
The above definition is the restriction of Mikhalkin's definition for tropical toric varieties.
\end{rmk}

The following lemma allows us to consider only toric divisors.
\begin{lem}[\cite{HM}, Lemma 2.42]\label{toricdiv}
Any Cartier divisor on a smooth tropical toric variety is linearly equivalent to a  toric divisor.
\end{lem}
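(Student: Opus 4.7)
The plan is to imitate the classical toric argument: trivialize $D$ on the dense torus using a global rational function, and identify the resulting boundary-supported remainder as a $\Z$-linear combination of ray divisors. The dense torus here is the open stratum $N_{\R}\cong\R^{n}$ corresponding to $\{0\}\in\Sigma$, whose complement is exactly $X_{\Sigma}\setminus N_{\R}=\bigcup_{\rho\in\Sigma^{[1]}}\supp D_{\rho}$, so any Cartier divisor supported there will have to be shown to be toric.

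First, by the remark identifying Cartier divisors with codimension-one weighted balanced polyhedral complexes, $D\cap N_{\R}$ is such a complex in $\R^{n}$. I would invoke the standard fact that every codimension-one weighted balanced polyhedral complex in $\R^{n}$ is principal, producing a tropical rational function $f=g-h$ on $N_{\R}$, with $g,h$ tropical Laurent polynomials, such that $\divisor(f)=D|_{N_{\R}}$. Each monomial appearing in $g$ or $h$ is a $\Z$-affine linear function $m\in M$ and, via the local regular functions $\varphi_{\rho,\sigma}$, extends canonically to a regular section of a toric line bundle over $X_{\Sigma}$. Taking tropical sums of these extensions supplies rational extensions $\tilde g,\tilde h$ on $X_{\Sigma}$, and I set $\tilde f:=\tilde g-\tilde h$. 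Then $D':=D-\divisor(\tilde f)$ is Cartier with support inside $\bigcup_{\rho}\supp D_{\rho}$.

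Next I would show that any such boundary-supported Cartier divisor $D'$ is a $\Z$-linear combination of ray divisors. On a maximal smooth chart $U_{\sigma}\cong\T^{n}$ with rays $\rho_{1},\dots,\rho_{n}$, the local defining rational function $f_{\sigma}$ has non-smooth locus contained in the toric boundary of $U_{\sigma}$, hence $f_{\sigma}|_{N_{\R}}$ is everywhere $\Z$-affine linear, of the form $\sum_{i}a^{\sigma}_{i}m_{\rho_{i}}+c$ in the dual frame of the rays of $\sigma$. Therefore $D'|_{U_{\sigma}}=\sum_{i}a^{\sigma}_{i}D_{\rho_{i}}$. The coefficient $a^{\sigma}_{\rho}$ is intrinsically the weight of the codimension-one cell along $D_{\rho}$ in the balanced polyhedral complex attached to $D'$, so it does not depend on the choice of maximal cone $\sigma\succ\rho$. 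Patching, $D'=\sum_{\rho}a_{\rho}D_{\rho}$ globally, and combining the two steps yields $D=\sum_{\rho}a_{\rho}D_{\rho}+\divisor(\tilde f)$, the desired linear equivalence.

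The main obstacle is the extension in step one: one must verify that on each chart the rational function $\tilde f$ differs from the local Cartier datum of $D$ by an invertible regular function. Since both agree on the dense subset $N_{\R}$ up to a Laurent monomial (because they induce the same divisor there), and since the extensions of monomials across the toric boundary remain invertible regular functions on their domain of definition, continuity of rational functions and density of $N_{\R}$ force agreement on overlaps, so $\tilde f$ is a genuine global rational function. A secondary subtlety is the principality statement on $\R^{n}$ used at the start, whose explicit construction can be obtained by integrating the weights of the balanced complex against a generic height function; smoothness of $\Sigma$ then guarantees integrality of the final coefficients $a_{\rho}$.
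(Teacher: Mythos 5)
The paper does not prove this lemma at all --- it is quoted verbatim from Meyer's thesis (\cite{HM}, Lemma 2.42) --- so there is no internal proof to compare against; your plan is essentially the standard argument one finds in that reference and in the classical toric setting, and I believe it is sound. Two points deserve care. First, the entire weight of the proof rests on the principality of codimension-one weighted balanced rational polyhedral complexes in $\R^{n}$, which you should cite (it is in \cite{AR}/\cite{HM}) rather than treat as a "secondary subtlety": your sketch via a height function is the right idea --- balancing around the codimension-two cells makes the slope $1$-cocycle on $\R^{n}\setminus\supp(D|_{N_{\R}})$ closed, and simple-connectedness of $\R^{n}$ makes it exact, so one can integrate to a piecewise $\Z$-affine function --- but you still need to record that the complex has finitely many facets (so the function is a difference $g-h$ of two tropical Laurent polynomials with finite monomial sets) and that writing an arbitrary integral PL function as such a difference of convex ones is itself a small lemma. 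Second, in the boundary step you implicitly assume $D'$ admits a single defining rational function $f_{\sigma}$ on each maximal chart $U_{\sigma}$, whereas a Cartier divisor is only given on some open cover; this is repaired by observing that $D'|_{N_{\R}}$ is the empty divisor on the connected set $N_{\R}$, so all local defining data share one slope $m_{\sigma}\in M$ there and glue, after which your identification $D'|_{U_{\sigma}}=\sum_{i}\langle m_{\sigma},e_{\rho_{i}}\rangle D_{\rho_{i}}$ and the intrinsic (weight-of-the-cycle) description of the coefficients finish the patching. With those two points made explicit the plan is a complete proof.
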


Let $D=\sum_{\rho}a_{\rho}D_{\rho}$ be a toric divisor of a tropical toric variety $X_{\Sigma}$.
Then Proposition \ref{section} said
$\Gamma (X,\mathcal{O}(D))\cong \{ f\in \mathcal{M}(X)\mid \divisor (f)+D\geq 0 \}$.
Let $f$ be a rational function on $X$ with $\divisor (f)+D\geq 0$.
Then $f|_{U_{0}}$ is a regular function on $U_{0}=N_{\R}$, thus we can consider that
$f$ is written as  $``\sum_{m\in A}c_{m}x^{m}"$ for some finite subset $A\subset M$.

In order to describe $\Gamma(X,\mathcal{O}(D))$ by its generators, we show the following lemma.

\begin{lem}[]
\[ \divisor (x^{m})=\sum_{\rho}\langle m,e_{\rho} \rangle D_{\rho}\]
\end{lem}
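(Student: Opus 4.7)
The plan is to verify the identity chart-by-chart on each affine piece $U_\sigma$ and then glue. Fix a smooth cone $\sigma\in\Sigma$ of dimension $k$ with rays $\rho_1,\ldots,\rho_k$. Smoothness of $\sigma$ means that the primitive vectors $e_{\rho_1},\ldots,e_{\rho_k}$ extend to a $\Z$-basis $e_{\rho_1},\ldots,e_{\rho_k},f_{k+1},\ldots,f_n$ of $N$; let $m_{\rho_1},\ldots,m_{\rho_k},g_{k+1},\ldots,g_n$ be the dual basis in $M$. First I would expand an arbitrary $m\in M$ as
\[ m \;=\; \sum_{i=1}^k \langle m,e_{\rho_i}\rangle\, m_{\rho_i} \;+\; \sum_{j=k+1}^n \langle m,f_j\rangle\, g_j. \]

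Next, I observe that each $f_j$ lies outside $\sigma$, so that both $g_j$ and $-g_j$ sit in $\sigma^\vee\cap M$; consequently $x^{g_j}$ is an invertible regular function on $U_\sigma$ (in fact a $\Z$-affine linear function on the dense torus orbit). Translating the above decomposition into tropical notation therefore yields
\[ x^m\big|_{U_\sigma} \;=\; \sum_{i=1}^k \langle m,e_{\rho_i}\rangle\, \varphi_{\rho_i,\sigma} \;+\; h_\sigma, \]
where $h_\sigma$ is invertible regular. Hence the weighted balanced polyhedral complex attached to $x^m$ on $U_\sigma$ coincides with the one attached to $\sum_{i} \langle m,e_{\rho_i}\rangle \varphi_{\rho_i,\sigma}$, which by the definition of the ray divisors is exactly $\sum_{i=1}^k \langle m,e_{\rho_i}\rangle D_{\rho_i}\big|_{U_\sigma}$.

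To promote this to the claimed global equality I would invoke the standard fact that, for a ray $\rho\in\Sigma^{[1]}$ with $\rho\not\prec\sigma$, the support of $D_\rho$ avoids $U_\sigma$: the closed orbit cut out by the vanishing of $\varphi_{\rho,\sigma'}$ lives only in the charts $U_{\sigma'}$ with $\rho\prec\sigma'$. Hence on $U_\sigma$ the right-hand sum $\sum_{\rho\in\Sigma^{[1]}} \langle m,e_\rho\rangle D_\rho$ reduces to the partial sum over rays of $\sigma$, matching the local identity above. Patching across all $\sigma\in\Sigma$ then yields the proposition.

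The main obstacle will be to set up the linear-algebraic ingredients cleanly: confirming (i) that smoothness of $\sigma$ really lets us extend $\{e_{\rho_i}\}$ to a $\Z$-basis of $N$, so that the transverse monomials $x^{g_j}$ are genuinely invertible on $U_\sigma$, and (ii) that $D_\rho$ is independent of the auxiliary choice of dual frame $m_\rho$, which should hold because two admissible choices differ by an element of the $\Z$-span of $g_{k+1},\ldots,g_n$, so the corresponding local defining functions differ by an invertible regular function and define the same divisor.
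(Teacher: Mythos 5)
Your proof is correct and follows essentially the same route as the paper: expand $m$ in the dual frame of the ray generators of a cone $\sigma$ and identify $x^{m}$ on $U_{\sigma}$ with $\sum_{i}\langle m,e_{\rho_{i}}\rangle \varphi_{\rho_{i},\sigma}$ up to an invertible function, chart by chart. The paper simply restricts to maximal cones, where the $e_{\rho_{i}}$ already form a $\Z$-basis of $N$; your additional care with non-maximal cones, the invertibility of the transverse monomials $x^{g_{j}}$, and the fact that $\supp D_{\rho}$ misses $U_{\sigma}$ when $\rho\not\prec\sigma$ is a sound but inessential refinement of the same argument.
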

\begin{proof}
It is enough to show the above equation on $U_{\sigma}$ for a maximal cone $\sigma$.
Let $\rho_{1},\ldots ,\rho_{k}$ be rays of $\sigma$, $e_{\rho_{i}}$ be the primitive generator of $\rho_{i}$,
and $\{ m_{\rho_{1}},\ldots ,m_{\rho_{k}}\}$ be the dual basis of $\{ e_{\rho_{1}},\ldots ,e_{\rho_{k}} \}$.
Then $m\in M$ can be written as $\sum \langle m_{\rho_{i}},e_{\rho_{i}}\rangle m_{\rho_{i}}$.
Thus $\divisor (x^{m})=\sum \langle m_{\rho_{i}},e_{\rho_{i}}\rangle D_{\rho_{i}}$
by the definition of $\varphi_{\rho,\sigma}$.
\end{proof}

\begin{dfn}
Let $f=``\sum_{m\in A}c_{m}x^{m}"$ be a rational function on $X_{\Sigma}$.
Take a generic point $p$ of $\supp D_{\rho}$ and a neighborhood $V$ of $p$ such that
$f|_{V\cap N_{\R}}$ is an affine linear function $``c_{m}\cdot x^{m}"=c_{m}+\langle m,x \rangle$.
Then the {\it degree of $f$ along a ray divisor $D_{\rho}$}, denoted by $\deg (f)_{\rho}$,
 is defined as $\langle m, e_{\rho}\rangle$.
This is not depend on the choice of a generic point $p$
since the degree of $f$ along $D_{\rho}$ coincides with 
$\min_{m\in A}\{  \langle  m, e_{\rho} \rangle \mid  c_{m}\neq -\infty  \}$.
\end{dfn}

Then we get the following lemma.
\begin{lem}
$\divisor (f)= \divisor (f|_{\R^{n}})+\sum_{\rho\in \Sigma^{[1]}}\deg (f)_{\rho}D_{\rho}.$
\end{lem}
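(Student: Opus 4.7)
The plan is to verify the identity locally on each affine chart $U_\sigma$ for $\sigma$ a maximal cone of $\Sigma$, by comparing the weights that both sides assign to every generic point of every codimension-one cell of $X_\Sigma$. As noted after the definition of a Cartier divisor in the preliminaries, a Cartier divisor is determined by its associated weighted balanced codimension-one polyhedral complex, so this pointwise comparison suffices. The codimension-one cells meeting $U_\sigma$ split into two kinds: cells contained in the open dense orbit $\R^n \subset U_\sigma$, and cells contained in some $\supp D_\rho$ for a ray $\rho \prec \sigma$.

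For a generic point $p$ of a cell lying inside $\R^n$, the divisor $\divisor(f)$ is by construction determined locally by the tropical Laurent polynomial $f|_{\R^n}$, so its multiplicity near $p$ agrees with that of $\divisor(f|_{\R^n})$; and $\sum_\rho \deg(f)_\rho D_\rho$ contributes zero near $p$, since no $D_\rho$ meets $\R^n$. Hence the identity holds trivially on interior cells.

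For a generic point $p$ of $\supp D_\rho$, the definition of $\deg(f)_\rho$ supplies a neighborhood $V$ of $p$ on which $f|_{V \cap \R^n}$ is the single affine linear function $``c_m x^m"$ with $\langle m, e_\rho \rangle = \deg(f)_\rho$. Consequently $f$ agrees with $``c_m x^m"$ as rational functions on $V$, and the preceding lemma yields
\[ \divisor(f)|_V = \divisor(x^m)|_V = \sum_{\rho' \prec \sigma} \langle m, e_{\rho'}\rangle \, D_{\rho'}|_V, \]
whose weight along $D_\rho$ is exactly $\deg(f)_\rho$. Meanwhile $\divisor(f|_{\R^n})$ vanishes on $V \cap \R^n$ because $f|_{V \cap \R^n}$ is affine linear, so it contributes nothing to the multiplicity along $D_\rho$; this matches the right-hand side. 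The step that requires the most care is the passage from "$f|_{V \cap \R^n}$ is a single affine linear expression" to "$f$ is a single monomial as a rational function on all of $V$", which forces one to choose $p$ generic enough in $\supp D_\rho$ to avoid the bending locus of the tropical polynomial $``\sum_m c_m x^m"$, so that the previous lemma can be applied verbatim. Once this is done, assembling the local comparisons over all maximal cones $\sigma$ yields the asserted equality of Cartier divisors on $X_\Sigma$.
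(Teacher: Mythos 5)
Your argument is correct. The paper actually states this lemma without proof, treating it as an immediate consequence of the preceding lemma ($\divisor(x^{m})=\sum_{\rho}\langle m,e_{\rho}\rangle D_{\rho}$) and of the definition of $\deg(f)_{\rho}$; your local comparison of weights at generic points of interior cells and of each $\supp D_{\rho}$ is exactly the intended unpacking of that, and your care in choosing $p$ generic enough that a single monomial dominates near $\supp D_{\rho}$ (and that the closure of the corner locus of $f|_{\R^{n}}$, being of codimension two there, contributes nothing along $D_{\rho}$) is the right point to insist on.
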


\begin{prop}\label{gens}
Let $D$ be a toric divisor on a smooth tropical toric variety $X$. Then
\[ \Gamma (X,\mathcal{O}(D))\cong ``\sum_{\divisor (x^{m})+D\geq 0 } \T \cdot x^{m}" . \]
\end{prop}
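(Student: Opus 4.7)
The plan is to combine Proposition~\ref{section} with the two lemmas just proved (the formula $\divisor(x^m)=\sum_\rho\langle m,e_\rho\rangle D_\rho$ and the decomposition $\divisor(f)=\divisor(f|_{\R^n})+\sum_\rho\deg(f)_\rho D_\rho$) and reduce the statement to a finite check on the monomial exponents appearing in a Laurent expansion.

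First, by Proposition~\ref{section} it suffices to exhibit a $\T$-module isomorphism
\[
\{f\in\mathcal M(X)\mid \divisor(f)+D\ge 0\}\;\cong\;``\textstyle\sum_{\divisor(x^m)+D\ge 0}\T\cdot x^m".
\]
The direction $\supseteq$ is immediate: if each $x^{m_i}$ satisfies $\divisor(x^{m_i})+D\ge 0$, then any tropical combination $g=``\sum c_i x^{m_i}"$ restricts on $N_\R$ to a tropical polynomial in the $c_i+\langle m_i,x\rangle$, whose corner divisor is automatically effective, while along each ray $\deg(g)_\rho=\min_i\langle m_i,e_\rho\rangle\ge -a_\rho$ by hypothesis; combined with the decomposition lemma this gives $\divisor(g)+D\ge 0$.

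For the nontrivial direction, take any $f$ with $\divisor(f)+D\ge 0$. Since $D=\sum a_\rho D_\rho$ is toric, $\supp D\subset X\setminus N_\R$, so on the open torus $f$ is a regular function; writing $N_\R=\R^n$, this means $f|_{N_\R}=``\sum_{m\in A}c_m x^m"$ for a finite $A\subset M$, exactly as the paragraph preceding the lemma asserts. Apply the decomposition $\divisor(f)=\divisor(f|_{\R^n})+\sum_\rho\deg(f)_\rho D_\rho$: since $\divisor(f|_{\R^n})$ is disjoint from $\supp D$ and the condition $\divisor(f)+D\ge 0$ must hold termwise, we conclude $\deg(f)_\rho+a_\rho\ge 0$ for every ray $\rho$. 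Using $\deg(f)_\rho=\min_{m\in A,\,c_m\ne-\infty}\langle m,e_\rho\rangle$, this forces $\langle m,e_\rho\rangle+a_\rho\ge 0$ for every $m\in A$ with $c_m\ne-\infty$ and every ray $\rho$, which by the monomial divisor formula is exactly $\divisor(x^m)+D\ge 0$. Hence $f=``\sum_{m\in A}c_m x^m"$ lies in the tropical span of monomials $x^m$ with $\divisor(x^m)+D\ge 0$, proving the remaining inclusion.

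The step I expect to be the main obstacle is the implicit finite Laurent expansion: one must justify that a rational function $f\in\mathcal M(X)$ with $\divisor(f)$ effective over $N_\R$ is actually a tropical Laurent polynomial (finite max), not merely a difference of two such. This uses that $f|_{N_\R}$ is regular globally on $\R^n$ and that a globally regular tropical function on $\R^n$ coincides with the tropical sum of its Newton monomials, a standard fact the author invokes in the paragraph preceding the statement. Once that finiteness is granted, the argument reduces, via the two ray-by-ray lemmas, to the purely combinatorial observation that a minimum of linear forms is bounded below by $-a_\rho$ iff each summand is.
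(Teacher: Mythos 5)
Your proposal is correct and follows essentially the same route as the paper: restrict $f$ to the dense torus to get a finite tropical Laurent polynomial $``\sum_{m\in A}c_m x^m"$, then use $\deg(f)_\rho=\min_{m\in A}\langle m,e_\rho\rangle$ together with $\divisor(f)+D\ge 0$ along each boundary divisor to force $\langle m,e_\rho\rangle+a_\rho\ge 0$ for every contributing $m$ (the paper phrases this as a contradiction with a single violating $m'$, which is the same computation). The extra care you take with the $\supseteq$ direction and with the finiteness of the Laurent expansion only makes explicit what the paper asserts in the paragraph preceding the proposition.
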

\begin{proof}
It is clear that
\[ \Gamma (X,\mathcal{O}(D))\cong \{ f\in \mathcal{M}(X)\mid \divisor (f)+D\geq 0 \} \supseteq
``\sum_{\divisor (x^{m})+D\geq 0 } \T \cdot x^{m}"\]
We show the inverse inclusion.
Let $f$ be a rational function on $X$ with $\divisor (f)+D\geq 0$.
This $f$ can be written as
$``\sum_{m\in A} c_{m}x^{m}"=\max_{m\in A}\{ m\cdot x +c_{m} \}$,
where $A$ is a finite subset of $M$ satisfying that
for all proper subset $A'\subset A$, $``\sum_{m\in A'} c_{m}x^{m}"\neq f$.

Suppose that there exists $m'\in A$ satisfying $\divisor (x^{m'})+D\ngeq 0$.
Then there exists a ray $\rho$ such that $\langle m', e_{\rho}\rangle +a_{\rho}<0$,
although $\deg (f)_{\rho}+a_{\rho}=\min_{m\in A} \langle m, e_{\rho}\rangle +a_{\rho}\leq
\langle m', e_{\rho}\rangle +a_{\rho}<0$.
This contradicts to $\divisor (f)+D\geq 0$.
\end{proof}


For a toric divisor $D=\sum a_{\rho}D_{\rho}$, we define a polyhedron $P(D)$ by
$P(D):=\{ m\in M_{\R} \mid \langle m, e_{\rho}\rangle +a_{\rho}\geq 0 \}$.
Then by Proposition \ref{gens}, we get $h^{0}_{a}(X,D) = h^{0}_{b}(X,D)=|P_{D}\cap M|$.
Thus we get the following result by Proposition \ref{Mainresult1}.

\begin{prop}\label{h0}
For a toric divisor  $D$ on a smooth tropical toric variety $X$,
\[h^{0}(X,D) = |P_{D}\cap M| . \]
\end{prop}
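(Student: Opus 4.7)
The plan is to read off $h^{0}_{a}(X,D)$ and $h^{0}_{b}(X,D)$ from the explicit generating set of $\Gamma(X,\mathcal{O}(D))$ supplied by Proposition \ref{gens}, and then to squeeze $h^{0}(X,D)$ between them using Proposition \ref{Mainresult1}.

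First I would identify the lattice points of $P_{D}$ with the extremal generators. The condition $\divisor(x^{m})+D \geq 0$ unwinds, via the preceding lemmas, to $\langle m, e_{\rho}\rangle + a_{\rho}\geq 0$ for every ray $\rho$, so Proposition \ref{gens} presents $\Gamma(X,\mathcal{O}(D))$ as the tropical module spanned by the monomials $\{x^{m} : m \in P_{D}\cap M\}$. Each such monomial is a tropical extremal of this module, since an affine function with a single slope cannot be written as the pointwise maximum of two genuinely different tropical polynomials. By Proposition \ref{gen} these monomials therefore form the unique minimal generating set up to tropical scaling, of cardinality $|P_{D}\cap M|$.

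Next I would compute $l_{\Gamma(X,\mathcal{O}(D))}(x)$ for every $x \in X$. On the open dense torus $N_{\R}\subset X$ every restricted monomial $x^{m}$ is an affine function whose slope is exactly $m \in M$, so two monomials with different $m$ are never equal up to a tropical scalar on any connected inner open set. Hence at every inner point $y$ of $U(\Gamma(X,\mathcal{O}(D)))$ one obtains $l_{\Gamma(X,\mathcal{O}(D))}(y) = |P_{D}\cap M|$, and the convention $l_{M}(x):= \max_{y \in U_{x}} l_{M}(y)$ propagates the same count to every boundary point. From this, Definition \ref{hahb} gives $h^{0}_{b}(X,D) = |P_{D}\cap M|$ directly, and taking the full family $\{x^{m} : m \in P_{D}\cap M\}$ as witnesses in the definition of $h^{0}_{a}$ yields $h^{0}_{a}(X,D) \geq |P_{D}\cap M|$; the reverse inequality is automatic from $h^{0}_{a} \leq h^{0}_{b}$. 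Applying Proposition \ref{Mainresult1} then pinches $h^{0}(X,D)$ to the common value.

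The only subtle point I anticipate is the distinctness of monomial restrictions at points of positive sedentarity, where several $x^{m}$ may degenerate to $-\infty$ and be formally indistinguishable. The $\max_{y \in U_{x}}$ clause in the definition of $l_{M}(x)$ is designed exactly to bypass this: it records the richness of $M$ on the inner part of a neighborhood of $x$, reducing the boundary behavior to the straightforward slope computation on the open torus.
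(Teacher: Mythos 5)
Your proposal is correct and follows essentially the same route as the paper: the paper likewise reads off the minimal generating set $\{x^{m}: m\in P_{D}\cap M\}$ from Proposition \ref{gens}, concludes $h^{0}_{a}(X,D)=h^{0}_{b}(X,D)=|P_{D}\cap M|$ because these monomials have pairwise distinct slopes at every point, and then applies Proposition \ref{Mainresult1}. Your discussion of extremality and of boundary points of positive sedentarity simply fills in details the paper leaves implicit.
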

This result is compatible for the classical result.

\section{Intersection number of divisors on tropical toric surfaces}


By the intersection theory on tropical toric surface described in \cite{S0}, 
we get the following proposition.
\begin{prop}\label{Drho}
Let $\Sigma$ be a two dimensional smooth fan in $N_{\R}$.
Let $\rho_{1}$ and $\rho_{2}$ be two distinct rays of $\Sigma$.
Then the intersection number $D_{\rho_{1}}D_{\rho_{2}}$ of these is one if
there exists a cone $\sigma$ such that $\rho_{1}\prec \sigma$ and $\rho_{2}\prec \sigma$
and zero if else.

Let $\rho$ be a ray of $\Sigma$ with the primitive generating vector $u\in N$
and $\rho_{1}$, $\rho_{2}$ be distinct two rays of $\Sigma$ adjacent to $\rho$
with the primitive generating vectors $u_{1}, u_{2}\in N$.
Then the self-intersection number of $D_{\rho}$ is the number $b$ satisfying that
$u_{1}+u_{2}+bu=0$.
\end{prop}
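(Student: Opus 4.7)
The plan is to handle the two parts of the proposition separately: part (1) by a direct local computation on the affine toric charts, and part (2) by reducing to part (1) via a carefully chosen linear equivalence obtained from $\divisor(x^{m})$.

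For part (1), I would work in the affine toric chart $U_{\sigma}$ associated with a common two-dimensional cone $\sigma$ of $\rho_{1}$ and $\rho_{2}$, when one exists. Smoothness of $\sigma$ means $\{e_{\rho_{1}}, e_{\rho_{2}}\}$ is a $\Z$-basis of $N$, so $U_{\sigma}\cong \T^{2}$ and its two coordinates are the dual-basis monomials $\varphi_{\rho_{1},\sigma}$ and $\varphi_{\rho_{2},\sigma}$. In these coordinates $D_{\rho_{1}}$ and $D_{\rho_{2}}$ are precisely the two coordinate axes, meeting transversely at the single torus-fixed point corresponding to $\sigma$; by the intersection theory on smooth tropical toric surfaces in \cite{S0}, this intersection contributes multiplicity exactly $1$. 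If no common cone contains both rays, then $\supp D_{\rho_{1}}\cap \supp D_{\rho_{2}}=\emptyset$ (the two orbit-closures are disjoint), so the intersection number is $0$.

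For part (2), smoothness of the two-cone spanned by $\rho$ and $\rho_{1}$ gives a $\Z$-basis $\{u, u_{1}\}$ of $N$, and I choose $m\in M$ with $\langle m, u\rangle = 1$ and $\langle m, u_{1}\rangle = 0$. Applying the lemma $\divisor(x^{m})=\sum_{\rho'}\langle m, e_{\rho'}\rangle D_{\rho'}$ and isolating the $D_{\rho}$-term shows
\[ D_{\rho}\sim -\sum_{\rho'\neq \rho}\langle m, e_{\rho'}\rangle D_{\rho'}. \]
Intersecting both sides with $D_{\rho}$ and using part (1), every term drops except those for which $\rho'$ shares a cone with $\rho$, namely $\rho'=\rho_{1}$ and $\rho'=\rho_{2}$. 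The relation $u_{1}+u_{2}+bu=0$ yields $\langle m, u_{2}\rangle = -\langle m, u_{1}\rangle - b\langle m, u\rangle = -b$, hence
\[ D_{\rho}^{2} = -\langle m, u_{1}\rangle\cdot 1 - \langle m, u_{2}\rangle\cdot 1 = 0 + b = b. \]

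The main technical obstacle is justifying the multiplicity-$1$ local intersection claim in part (1) rigorously within tropical intersection theory: a priori, tropical divisors can intersect non-transversely or at boundary points with subtle multiplicities, and one must verify both that the torus-fixed point of $\sigma$ is the only contribution and that it contributes exactly $1$. Shaw's framework in \cite{S0} is designed precisely to make this local calculation go through on smooth tropical toric surfaces, so the proof essentially amounts to invoking her setup in the chosen local coordinates; everything else is then a purely formal consequence of linear equivalence and the smoothness relation $u_{1}+u_{2}+bu=0$.
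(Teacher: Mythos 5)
Your proposal is correct, but it takes a different route from the paper: the paper gives no proof of this proposition at all, simply asserting it as a consequence of the intersection theory developed in Shaw's thesis \cite{S0} (and the remark immediately following notes that the self-intersection formula is, after translation, Corollary 3.2.6 there). You instead reconstruct the classical toric-geometry derivation: the only genuinely local input is that two boundary divisors spanning a smooth two-dimensional cone meet transversely at the corresponding torus-fixed point with multiplicity one (and have disjoint supports otherwise), and the self-intersection formula then follows formally by writing $\divisor(x^{m})=\sum_{\rho'}\langle m,e_{\rho'}\rangle D_{\rho'}$ for $m$ dual to $u$ along $u_{1}$, moving $D_{\rho}$ to $-\sum_{\rho'\neq\rho}\langle m,e_{\rho'}\rangle D_{\rho'}$, and intersecting with $D_{\rho}$; your arithmetic $\langle m,u_{2}\rangle=-b$ checks out (e.g.\ for $\mathbb{P}^{2}$ it gives $D_{\rho}^{2}=1$ as it should). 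What your approach buys is a self-contained proof reducing everything to one transversality computation; what it additionally requires, and what you should state explicitly rather than leave implicit, is that the intersection number in \cite{S0} is invariant under linear equivalence of divisors (and that the fan is complete, so that the adjacent rays $\rho_{1},\rho_{2}$ exist and the intersection pairing is defined on classes --- an assumption the paper itself uses later when it sets $\chi(X_{\Sigma})=1$). Both facts are available in Shaw's framework, so the argument is sound.
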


\begin{rmk}
The latter of the above proposition is different to
Corollary 3.2.6 of Shaw \cite{S0} at a glance,
but these are equal.
\end{rmk}

\begin{thm}[Main result 2]
Let $\Sigma$ be a smooth fan in $N_{\R}$, $D$ be a divisor on $X_{\Sigma}$, 
$K_{X_{\Sigma}}$ be the canonical divisor of $X_{\Sigma}$ and 
$\chi (X_{\Sigma})$ be the topological Euler characteristic
of $X_{\Sigma}$.
Then 
\[ h^{0}(X_{\Sigma},D)+h^{0}(X_{\Sigma},K_{X_{\Sigma}}-D)\geq 
\chi (X_{\Sigma})+\frac{1}{2}D(D-K_{X_{\Sigma}}) \]
\end{thm}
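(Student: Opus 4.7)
The plan is to first apply Lemma~\ref{toricdiv} to replace $D$ by a linearly equivalent toric divisor $D=\sum_{\rho}a_{\rho}D_{\rho}$, so that $K_{X_\Sigma}-D=-\sum_\rho(1+a_\rho)D_\rho$ is also toric. By Proposition~\ref{h0}, both terms on the left-hand side become lattice-point counts:
\[ h^{0}(X_\Sigma,D)=|P_D\cap M|, \qquad h^{0}(X_\Sigma,K_{X_\Sigma}-D)=|P_{K_{X_\Sigma}-D}\cap M|, \]
where $P_D=\{m\in M_\R:\langle m,e_\rho\rangle+a_\rho\geq 0\ \forall\rho\}$ and $P_{K_{X_\Sigma}-D}$ is defined analogously using the shifts $-1-a_\rho$. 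The proposition thus reduces to a combinatorial statement about lattice points in these two polytopes.

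Next I would dispatch the non-complete case. If the rays of $\Sigma$ do not positively span $N_\R$, pick $v\in N_\R\setminus\{0\}$ with $\langle v,e_\rho\rangle\geq 0$ for all $\rho$; then whenever $P_D$ is non-empty it contains an entire lattice half-line parallel to a rational direction in the recession cone, so $|P_D\cap M|=\infty$ and the inequality holds trivially. The same argument applies to $P_{K_{X_\Sigma}-D}$, and the remaining edge cases in which both polytopes happen to be empty can be handled by direct inspection of the small smooth fans producing them. So from now on $\Sigma$ is complete, which makes $X_\Sigma$ a compact tropical surface homeomorphic to a closed polyhedral disk (the $\T^2$-charts glue along their $-\infty$-boundary strata into a disk-like compactification of $N_\R$), giving $\chi(X_\Sigma)=1$, and both $P_D$ and $P_{K_{X_\Sigma}-D}$ are bounded lattice polytopes in $M_\R\cong\R^2$.

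For the complete case, I would use Proposition~\ref{Drho} to expand $D^2$ and $D\cdot K_{X_\Sigma}$ explicitly in terms of the $a_\rho$, ray adjacency, and the self-intersection integers $b_\rho$ satisfying $u_{\rho_-}+u_{\rho_+}+b_\rho u_\rho=0$. When $D$ is nef, $P_D$ is a lattice polygon whose facets are in bijection with the rays of $\Sigma$, with $\mathrm{Area}(P_D)=\tfrac{1}{2}D^2$ and $|\partial P_D\cap M|=-D\cdot K_{X_\Sigma}$; Pick's theorem then gives $|P_D\cap M|=1+\tfrac{1}{2}D(D-K_{X_\Sigma})$ directly, and the inequality follows with the nonnegative term $|P_{K_{X_\Sigma}-D}\cap M|$ to spare. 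The main obstacle is the non-nef case, in which $P_D$ may be lower-dimensional, empty, or have facets that fail to correspond to the rays of $\Sigma$, so the identifications $\mathrm{Area}(P_D)=\tfrac{1}{2}D^2$ and $|\partial P_D\cap M|=-D\cdot K_{X_\Sigma}$ break. I would handle this by a case analysis on $\dim P_D\in\{0,1,2\}$ together with $P_D=\emptyset$, showing in each degenerate situation that the defect on the left is absorbed by $|P_{K_{X_\Sigma}-D}\cap M|$, using Pick's theorem on the complementary polytope and the substitution $m\mapsto -m$ that pairs the lattice points of $P_{K_{X_\Sigma}-D}$ with lattice points strictly violating every defining inequality of $P_D$. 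Equivalently, one can invoke the classical Riemann-Roch equality $|P_D\cap M|-h^{1}(D)+|P_{K_{X_\Sigma}-D}\cap M|=1+\tfrac{1}{2}D(D-K_{X_\Sigma})$ for the associated smooth projective toric surface and use $h^{1}(D)\geq 0$.
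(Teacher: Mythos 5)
Your proposal is correct, but only because of its last sentence: the appeal to the classical Riemann--Roch equality for the associated smooth projective toric surface together with $h^{1}(D^{an})\geq 0$ is precisely the paper's proof (the paper reduces to a toric divisor via Lemma~\ref{toricdiv}, identifies $h^{0}(X_{\Sigma},D)=|P_{D}\cap M|$ and $h^{0}(X_{\Sigma},K_{X_{\Sigma}}-D)=|P_{K_{X_{\Sigma}}-D}\cap M|$ with the classical $h^{0}$ and $h^{2}$ via Proposition~\ref{h0} and Serre duality, matches the intersection numbers via Proposition~\ref{Drho}, and quotes the classical inequality). The bulk of your writeup instead pursues a genuinely different, self-contained combinatorial route via Pick's theorem. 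That route buys independence from complex geometry and works cleanly for nef $D$, but the non-nef case is where all the difficulty is concentrated: there $h^{1}(D^{an})$ can be strictly positive, $P_{D}$ degenerates in several inequivalent ways, and showing that the defect is absorbed by $|P_{K_{X_{\Sigma}}-D}\cap M|$ amounts to a combinatorial proof of $h^{1}\geq 0$, which your sketch announces but does not carry out; as written, only the classical-RR branch of your argument is complete. On the other hand, your discussion of non-complete fans is a genuine improvement in care over the paper, which silently assumes $\Sigma$ is complete (both its use of ``classical Riemann--Roch for compact smooth toric surfaces'' and the finiteness of $|P_{D}\cap M|$ require this); note, though, that your remaining edge case where both polytopes are empty (e.g.\ the fan with two opposite rays and $a_{1}+a_{2}=-1$) is also left uninspected, and there even the intersection product in Proposition~\ref{Drho} needs reinterpretation, so that branch too is a declared but unexecuted case analysis.
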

\begin{proof}
By Lemma \ref{toricdiv}, we can assume that $D$ is a toric divisor $\sum_{\rho}a_{\rho}D_{\rho}$.

Let $X_{\Sigma}^{an}$ be a complex toric variety associated to the fan $\Sigma$,
$D_{\rho}^{an}$ be the classical ray divisor associated to a ray $\rho$,
$D^{an}$ be the classical divisor $\sum_{\rho}a_{\rho}D_{\rho}^{an}$ corresponding to $D$
and $K_{\Sigma}^{an}$ be a divisor $-\sum_{\rho}D_{\rho}^{an}$ corresponding to 
the torus-invariant canonical divisor $K_{\Sigma}=-\sum_{\rho}D_{\rho}$. of $X_{\Sigma}^{an}$.
Then classical Riemann-Roch inequality for compact smooth toric surfaces says that
\[ h^{0}(X_{\Sigma}^{an},D^{an})+h^{0}(X_{\Sigma}^{an},K_{X_{\Sigma}^{an}}-D^{an})\geq 
\chi (\mathcal{O}_{X_{\Sigma}^{an}})+\frac{1}{2}D^{an}(D^{an}-K_{X_{\Sigma}^{an}}) \]

Proposition \ref{h0} tells us the fact that the value of the right-hand side of the 
above inequality coincides with the values of corresponding classical one. That is,
\[
h^{0}(X_{\Sigma},D)+h^{0}(X_{\Sigma},K_{X_{\Sigma}}-D)=
h^{0}(X_{\Sigma}^{an},D^{an})+h^{0}(X_{\Sigma}^{an},K_{X_{\Sigma}^{an}}-D^{an}).
\]
On the other hand, by Proposition \ref{Drho} we get 
\[ 
\chi (X_{\Sigma})+\frac{1}{2}D(D-K_{X_{\Sigma}})=
\chi (\mathcal{O}_{X_{\Sigma}^{an}})+\frac{1}{2}D^{an}(D^{an}-K_{X_{\Sigma}^{an}}) .
\]
Here we use $\chi (X_{\Sigma})=\chi (\mathcal{O}_{X_{\Sigma}^{an}})=1$.
Thus we get the inequality by the classical inequality.
\end{proof}

\end{document}